\documentclass[twocolumn]{article} 

\usepackage{savesym} \savesymbol{AND}
\usepackage[
  left=1.8cm,
  right=1.8cm,
  top=2.2cm,
  bottom=2.4cm,
  columnsep=0.7cm
]{geometry}



\usepackage{amsmath}
\usepackage{amsthm} 
\usepackage{amssymb}
\usepackage{hyperref}
\usepackage{xurl}
\hypersetup{hidelinks=true}

\usepackage{graphicx}
\usepackage[numbers,square]{natbib}
\usepackage{xcolor}
\usepackage{siunitx}
\usepackage{multirow}
\usepackage{comment} 

\usepackage{caption}
\usepackage{subcaption}
\usepackage{algorithmicx}
\usepackage[ruled,linesnumbered,norelsize]{algorithm2e}

\newtheorem{theorem}{Theorem}
\newtheorem{lemma}[theorem]{Lemma}
\newtheorem{proposition}[theorem]{Proposition}

\newenvironment{pf}[1][]
  {\begin{proof}}
  {\end{proof}}

\newtheorem{remark}{Remark}
\newtheorem{example}{Example}

\def\RR{\mathbb{R}}

\def\S{\mathcal{S}}
\def\V{\mathcal{V}}
\def\U{\mathcal{U}}
\def\X{\mathcal{X}}
\def\Co{\mathrm{Co}}
\def\tr{\mathrm{tr}}

\def\bv{{\bf v}}

\def\bX{\boldsymbol{\mathcal{X}}}
\def\btheta{\boldsymbol{\theta}}
\def\bchi{\boldsymbol{\chi}}

\allowdisplaybreaks[4]

\title{\textbf{Computationally Tractable Robust Nonlinear Model Predictive Control using DC Programming}}

\author{Martin Doff-Sotta\thanks{Department of Engineering Science, University of Oxford. \texttt{martin.doff-sotta@eng.ox.ac.uk}} \and
Zaheen A-Rahman\thanks{St John’s College / Department of Engineering Science, University of Oxford. \texttt{zaheen.a-rahman@sjc.ox.ac.uk}} \and
Mark Cannon\thanks{Department of Engineering Science, University of Oxford. \texttt{mark.cannon@eng.ox.ac.uk}}}

\date{}

\begin{document}
\maketitle


\noindent \textbf{\textit{Abstract}---We propose a computationally tractable, tube-based robust nonlinear model predictive control (MPC) framework using difference-of-convex (DC) functions  and sequential convex programming. For systems with differentiable discrete time dynamics, we show how to construct systematic, data-driven DC model representations using polynomials and machine learning techniques. We develop a robust tube MPC scheme that convexifies the online optimization by linearizing the concave components of the model, and we provide guarantees of recursive feasibility and robust stability. We present three data-driven procedures for computing DC models and compare performance using a planar vertical take-off and landing (PVTOL) aircraft case study.}


\noindent \textbf{Keywords:} Nonlinear Model Predictive Control; Nonconvex Optimization; DC Programming; Data-driven Control.


\section{Introduction}

Robust model predictive control (MPC) is a receding-horizon optimal control strategy that  provides guarantees of robust stability and constraint satisfaction for systems with uncertain dynamics \cite{rawlings,kouvaritakis}. For nonlinear systems, dynamics are often difficult to identify accurately from data, while first-principles models typically contain unknown parameters, disturbances, and structural uncertainty. Uncertainty in the model used by the controller makes robustness guarantees essential for safe deployment. Achieving such guarantees, however, can lead to conservative designs and substantial computational effort. This paper develops a data-driven modelling and optimization framework for robust nonlinear MPC that improves the trade-off between computational cost and conservatism.

Tube MPC addresses model uncertainty over the prediction horizon by constructing tubes that bound system trajectories. The approach is well established for linear systems \cite{mayne2005robust,kouvaritakis}, but for nonlinear systems \cite{mayne2011tube,rawlings,lopez2019dynamic,kohler21,sasfi23},
robust uncertainty propagation remains a major obstacle to deployment. Existing nonlinear tube MPC methods often rely on restrictive structural assumptions 
(e.g.\ feedback linearizability) \cite{lopez2019dynamic}, 
computationally intensive offline designs (e.g.\ global contraction metrics) \cite{kohler21,sasfi23},
and general-purpose nonconvex solvers for the online MPC problem.

%
%

A common approach for nonlinear MPC for systems with differentiable dynamics is to linearize the model around previously predicted trajectories, yielding a sequence of convex approximations of the online optimization \cite{biegler98,lee2002constrained}. With suitable bounds on linearization errors \cite{leeman23}, such methods can provide recursive feasibility and robust stability guarantees using tools from robust linear MPC \cite{mark,buerger25}. Moreover, as sequential convex programming methods, they enjoy at least linear local rates of convergence \cite{messerer21}.
However, the requirement for linearization error bounds can cause conservative approximations \cite{leeman23} or impose restrictive state and input constraints \cite{mark,buerger25}.

When the system model is expressed as a difference of convex functions, the online MPC optimization can be formulated as a difference of convex (DC) programming problem \cite{DC-TMPC}. This is a special case of sequential convex programming \cite{messerer21}, but it avoids the difficulties of computing and incorporating linearization error bounds in the MPC optimization by exploiting the convex-concave procedure (CCP) \cite{lanckreit09,lipp2016variations}. In particular, the CCP requires linearization of only the concave terms in constraints governing the propagation of tubes over the prediction horizon~\cite{DC-TMPC}, leading to tight global bounds on model approximation errors and hence convergence to a local solution of the nonlinear MPC optimization~\cite{yana}. Recursive feasibility and robust stability guarantees are available in the presence of model uncertainty and disturbances~\cite{thesis, L4DC,yana}.

Although any system with continuous dynamics can in principle be approximated arbitrarily closely using differences of smooth convex functions \cite{hartman}, first-principles models are not available in DC form except in special cases \cite{DC-TMPC,yana}. 
Recent work has therefore proposed application-specific data-driven DC modelling approaches combined with physical models, using sum-of-squares convex (SOS-convex) polynomials \cite{doff2023robust, doff2023data}  and neural networks \cite{krausch2024handling,krausch25,steffen25}.
%
This paper provides a unifying theoretical framework and comparative numerical evaluation of these approaches for robust nonlinear MPC, including additional comparisons with a third method based on sums of radial basis functions. 

The contributions of this paper are as follows: i) development of a computationally tractable robust MPC algorithm for nonlinear systems with continuous dynamics and additive disturbances; ii) presentation of data-driven techniques for deriving system models in DC form; iii) development of a method to guarantee recursive feasibility in case of additive disturbances; iv) numerical studies characterizing the performance of each algorithm. 

The paper is organized as follows. Section \ref{sec:DC} introduces data-driven DC modelling.
Section \ref{sec:DC-TMPC} presents the tractable robust nonlinear MPC algorithm and tube parameterization.
Section \ref{sec:dist} extends this to the case of additive disturbances and introduces a backtracking line search to ensure recursive feasibility. 
Section \ref{sec:theory} presents convergence and robust stability results.
Section \ref{sec:results} evaluates performance in numerical studies, including a nonconvex PVTOL example.


\textit{Notation:}
At the $n$th discrete time step, a variable $x\in \RR^{n_{x}}$ is denoted $x[n]$, and a sequence predicted over horizon $n,\ldots,n+N$ is denoted 
${\{x_{k}\}_{k=0}^{N}:=\{ x_{0} , \ldots, x_{N}\}}$, 
where $x_{k}$ is the predicted value of $x[n+k]$.
A symmetric matrix $P\in\RR^{q\times q}$ is positive definite (or semidefinite)  if ${P\succ 0}$ (or $P\succeq 0$), and the weighted Euclidean norm is $\lVert x\rVert_P := (x^\top P x)^{1/2}$.
We denote $\lfloor f^\circ \rfloor(x, u)~=~f(x^\circ,u^\circ)~+~A_k (x - x^\circ)  + B_k (u - u^\circ)$ as the Jacobian linearization of $f$, with $A_k = (\partial f /\partial x)(x^\circ,u^\circ)$, $B_k = (\partial f /\partial u)(x^\circ,u^\circ)$.  
%
%
The $l$th element of a function $f:\RR^p\to\RR^q$ is denoted $[f]_l$ and $f$ is called a DC function (or simply DC) if $f = g - h$, where $g, h:\RR^p\to\RR^q$ are such that $[g]_l,[h]_l$ are convex functions for $l=1,\ldots,q$.
Optimization operations for vector-valued functions are intended componentwise, so if $f$ is vector-valued, then $\max_x f(x)$ has $l$th element $\max_x [f(x)]_l$ for all $l$. 

\section{DC decomposition}
\label{sec:DC}
\subsection{Smooth dynamical systems}
\label{sec:DC-split}

This paper considers nonlinear systems with dynamics
\begin{equation}
\label{eq:sys}
x[n+1] = f(x[n], u[n]), 
\end{equation}
where $x \in \RR^{n_x}$,  $u \in \RR^{n_u}$ and $f:\RR^{n_x}\times\RR^{n_u}\to\RR^{n_x}$ is continuous.
Any twice-continuously-differentiable function $f$ can be expressed in DC form \cite{hartman} such that $f = g - h$,
and any continuous function $f$ can be approximated arbitrarily closely by a DC function on a compact convex set~\cite{horst99:dcprog,hartman}. 
%
We describe in the following sections three methods of obtaining an approximate DC representation. In what follows, we denote $x = [x_k^\top \quad u_k^\top]^\top$. 

\subsection{SOS-convex polynomials}
\label{sec:polApp}
A DC decomposition can be obtained using  polynomial approximation. We present here a method that applies to a multivariate scalar function to compute the DC decomposition $f = g - h$, where $g, h$ are convex. This method applies to a large class of systems since: a) any continuous function can be approximated arbitrarily closely by a polynomial (Stone-Weierstrass theorem) and; b) any polynomial can be computed as a difference of convex polynomials of same degree \cite{ahmadi}.

\begin{itemize}
    \item[1)]  \begin{sloppypar}\textit{Fit polynomial to data.} We assume that the nonlinear model\footnote{Note that it does not need to be a mathematical function but can be defined from data.} $f$ can be approximated arbitrarily closely by a polynomial of degree $2d$ in Gram form such that $f \approx y(x)^\top F y(x)$, where $F = F^\top$ is the Gram matrix and ${y = [1, x_1, x_2, \dots, x_n, x_1 x_2, \dots, x_n^{d}]^\top}$ is a vector of monomials of degree up to $d$ ($y$ has size $C^{|x|}_{d + |x|}$). Generating $N_s$ samples $f_{s} = f(x_s), \forall s \in [1, ..., N_s]$  of the nonlinear model, we solve the following least squares problem: 
\begin{equation*}
\min_{\substack{F}} \quad  \sum_{s=0}^{N_s} \lVert f_{s} - y(x_s)^\top F y(x_s) \rVert^2_2,  \, \text{ s.t.} \, F = F ^\top. 
\end{equation*}
\end{sloppypar}
\begin{sloppypar}
 \item[2)]  \textit{Compute the Hessians.} Let $g(x) \approx y(x)^\top G y(x)$ and $h(x) \approx y(x)^\top H y(x)$ be convex polynomials\footnote{In particular, they are SOS-convex polynomials following the terminology in \cite{ahmadi}. SOS-convexity implies convexity and involves solving an SDP.} such that their Hessians ${d^2g(x)/d x^2 = y(x)^\top \mathcal{G} y(x)}$ and $d^2h(x)/d x^2 = y(x)^\top \mathcal{H} y(x)$ are positive semidefinite (PSD). Finding $G, H$ such that $F = G - H$ and $g(x)$, $h(x)$ are convex reduces to solving a feasibility  semidefinite program (SDP) with constraints $\mathcal{G} \succeq \sigma I  ,   \, \mathcal{H} \succeq \sigma I, \sigma \geq 0$,
 %
%
where $\mathcal{G} := \sum_{i,j} \left(e_ie_j^\top\right) \otimes \mathcal{G}_{ij}$,  $\mathcal{H} := \sum_{i,j} \left(e_ie_j^\top\right) \otimes \mathcal{H}_{ij}$ are block defined by $\forall i, j$
\begin{align*}
\mathcal{G}_{ij} &= {D}_{ji}^\top G + G {D}_{ij} + {D}_i^\top G D_j  + {D}_j^\top G D_i,
\\
\mathcal{H}_{ij} &= {D}_{ji}^\top (G - F)+ (G - F) {D}_{ij} 
\\
&\quad + {D}_i^\top (G - F) D_j  + {D}_j^\top (G - F) D_i, 
\end{align*}
 where $I$ is the identity matrix of compatible dimensions, $D_i$ is a matrix of coefficients such that $\partial y/ \partial {x_i} = D_i y $, $D_{ij} = D_i D_j$ such that $\partial^2 y/ \partial {x_i}\partial {x_j} = D_{ij} y $, and $e_i$ is the basis vector of the appropriate dimensions.
 \end{sloppypar}
\end{itemize}

\begin{remark}
In general, $f = [y^\top F_1 y, \hdots, y^\top F_{n_x}y]^\top$, which requires computing $n_x$ (offline) DC decompositions such that
\[
f = \underbrace{\begin{bmatrix}
    y^\top G_1 y\\ \vdots \\ y^\top G_{n_x}y 
\end{bmatrix}}_{g} - \underbrace{\begin{bmatrix}
    y^\top H_1 y\\ \vdots \\ y^\top H_{n_x}y 
\end{bmatrix}}_{h}. 
\]
\end{remark}

\subsection{Input-convex neural networks}

A neural network architecture with convex input-output map was introduced in \cite{amos2017input}. The so called input-convex neural network (ICNN) is obtained by constraining the kernel weights of the network  to be nonnegative  at training stage  and using convex activation functions. A fully connected $L$-layer feedforward ICNN with parameters $\theta = \{\Theta_{1:L-1}, \Phi_{0:L-1}, b_{0:L-1}\}$ and input-output map given by $z_L = f(x; \theta)$ can be defined by the following update equation $\forall l \in \{0, \hdots, L-1 \}$ (cf. Figure \ref{fig:NN}) 
\[
z_{l+1} = \sigma(\Theta_l z_l + \Phi_l x + b_l),
\]
where $z_l$ is the layer activation (with $z_0, \Theta_0 =0$), $\Theta_l$ are positively constrained kernel weights ($\{\Theta_l\}_{ij} \geq 0, \, \forall i, j$, $\forall l  \in \{1, \hdots, L-1 \}$), $\Phi_l$ are input weights, $b_l$ are bias weights, and $\sigma$ is a convex nondecreasing activation function (e.g. the ReLU function). Note that in order to allow sufficient representation power, the input is fed-back to each layer. 

Alternatively, the network can be expressed as the composition of affine transformations $\mathcal{A}_{\theta_{l}, x}(z) = \Theta_l z +  \Phi_l x + b_l$, $\forall l \in \{ 1, \hdots, L-1\}$ and $\mathcal{A}_{\Phi_{0}}(x) = \Phi_0 x + b_0$ and convex nondecreasing activation function $\sigma$ as follows
\begin{equation*}
f(x ; \theta) = (\sigma \circ \mathcal{A}_{\theta_{L-1}, x} \circ \hdots \circ \sigma \circ \mathcal{A}_{\theta_{1}, x} \circ \sigma \circ \mathcal{A}_{\Phi_{0}} ) (x). 
\end{equation*}

Each layer of an ICNN thus consists in the composition of a convex function with a nondecreasing convex function, which results in a convex map $f$ between the input of the network and the output (see \cite{boyd2004convex}). When the input and output are taken as $x = [x_k^\top \quad u_k^\top]^\top$ and $z_L = x_{k+1}$, the network can be used to learn the dynamic function of a system. 

Recently, a neural network model with DC structure was proposed in \cite{sankaranarayanan2022cdinn}, and consists in two parallel ICNN whose outputs are subtracted. They also note that a similar DC structure can be achieved with the same architecture
in which the kernel weights $\Theta_{L-1}$ connecting the two last layers are unconstrained. Such a network (which we refer to as DCNN in the sequel) can be trained on data collected from "smooth" dynamical systems to learn the dynamics in DC form. 

 \begin{figure}
     \centering
     \includegraphics[width=0.4\textwidth, trim={0cm 3cm 0cm 3cm}, clip]{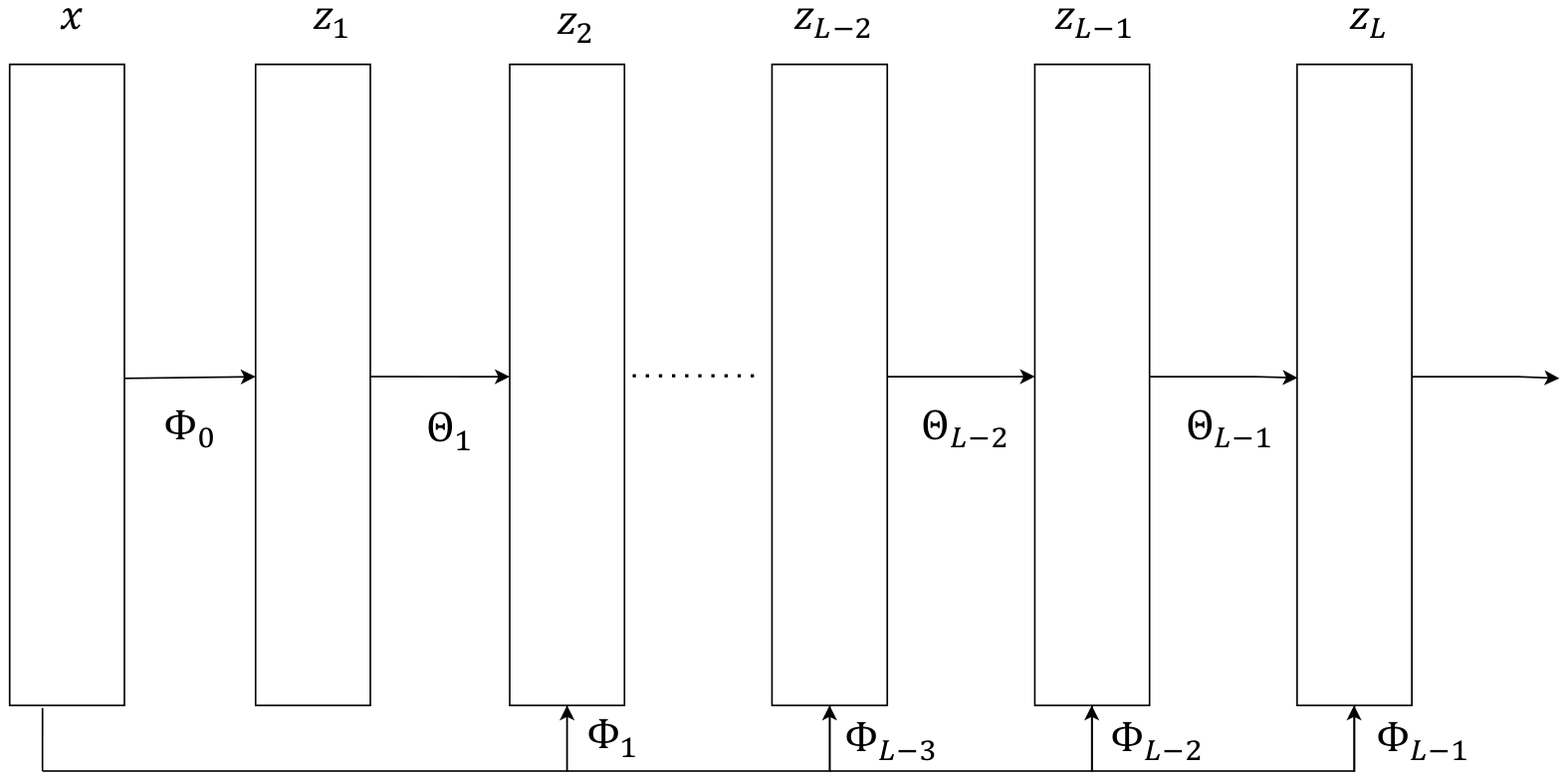}
     \caption{ICNN architecture $z_L = f(x; \theta)$ where the kernel weights $\Theta_l$ are nonnegative $\forall l \geq 1$ and activation functions are convex non-decreasing.}
     \label{fig:NN}
 \end{figure}

\subsection{Radial basis functions}
A radial basis function (RBF) is a function $\varphi$  whose value depends only on the distance between its input $x$ and some fixed point $c$ (or centre) , i.e ${\varphi(r) = {\varphi}(||x -c ||)}$ where $r = ||x -c ||$. A function $f \in \RR^{n_x}$ can be efficiently approximated using weighted sums of RBF as follows
\[
f = \sum_{j=0}^m \alpha_j \varphi(|| x - c_j||_{\rho_j}), 
\]
where $\alpha_j \in \RR^{n_x}$ are coefficients, $\rho_j \in \mathbb{S}_{+}^n$ are scaling matrices, and $c_j \in \RR^n$ are centres.

If the RBF kernel is a convex function, then $f = g - h$ is DC where
\begin{gather*}
g = \sum_{j=0}^m \max \{\alpha_j, 0\} \varphi(|| x - c_j||_{\rho_j}), \\ h = - \sum_{j=0}^m \min \{\alpha_j, 0\} \varphi(|| x - c_j||_{\rho_j}). 
\end{gather*}

The following theorem gives sufficient conditions for a RBF to be convex.

\begin{theorem}[Convex RBF]
A RBF $\varphi (r(x))$ where $r(x) = ||x -c ||$ is convex with respect to $x$ if $\varphi ' (r) \geq 0$ and $\varphi''(r)~\geq~0$. 
\end{theorem}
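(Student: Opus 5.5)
The plan is to treat $\varphi(r(x))$ as the composition of the scalar function $\varphi$ with the map $x\mapsto r(x)=\|x-c\|$ and to use the standard composition rule for convex functions \cite{boyd2004convex}. The rule says that a convex nondecreasing function of a convex function is convex. Two facts are needed. First, $r(x)=\|x-c\|$ is convex in $x$: it is a norm composed with an affine map, and by the triangle inequality and homogeneity, $\|\lambda x+(1-\lambda)y-c\|\le \lambda\|x-c\|+(1-\lambda)\|y-c\|$ for $\lambda\in[0,1]$. The same holds for the weighted norm $\|\cdot\|_{\rho}$ with $\rho\succeq 0$ used in the RBF expansion. Second, on the domain $r\ge 0$, the hypothesis $\varphi''(r)\ge 0$ makes $\varphi$ convex, and $\varphi'(r)\ge 0$ makes it nondecreasing.

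The composition step can be checked directly from the definition, without assuming differentiability of $r$, which fails at $x=c$. Take $x,y$ and $\lambda\in[0,1]$. Convexity of $r$ gives $r(\lambda x+(1-\lambda)y)\le \lambda r(x)+(1-\lambda)r(y)$. Monotonicity of $\varphi$ then gives $\varphi(r(\lambda x+(1-\lambda)y))\le \varphi(\lambda r(x)+(1-\lambda)r(y))$. Convexity of $\varphi$ on $[0,\infty)$ bounds the right-hand side by $\lambda\varphi(r(x))+(1-\lambda)\varphi(r(y))$, which is the convexity inequality for $\varphi\circ r$.

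The main subtlety is that the hypotheses concern $\varphi$ only on $r\ge 0$, the range of $r$. I will state that $\varphi'$ and $\varphi''$ are required to be nonnegative on $[0,\infty)$, with one-sided derivatives at $0$ if needed. Both the convexity and the monotonicity arguments use only values of $\varphi$ on this interval, because $\lambda r(x)+(1-\lambda)r(y)\ge 0$. Convexity from $\varphi''\ge0$ on an interval follows from the mean value theorem: $\varphi'$ is nondecreasing, so $\varphi$ lies above its tangents.

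As a remark, a Hessian cross-check for $x\neq c$ gives
\[
\nabla^2\varphi(r(x))=\varphi''(r)\,\nabla r\nabla r^\top+\varphi'(r)\,\nabla^2 r .
\]
Here $\nabla^2 r=\frac{1}{r}(I-\nabla r\nabla r^\top)\succeq 0$, so both terms are PSD under the hypotheses. This confirms that both sign conditions are genuinely needed: the first term requires $\varphi''\ge 0$, and the second requires $\varphi'\ge 0$. The composition argument above avoids the singularity at $x=c$, so I will present that argument as the proof.
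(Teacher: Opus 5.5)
Your proof is correct, but it takes a different route from the paper. The paper uses the second-order condition. By the chain rule $\nabla^2\varphi=\varphi''(r)\nabla r\nabla r^\top+\varphi'(r)\nabla^2 r$, and both terms are PSD because $\nabla^2 r\succeq 0$ by convexity of the norm; this is the computation you keep only as a remark. You instead verify the composition rule (a convex nondecreasing function of a convex function) directly from the definition of convexity.

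Your route gains rigour and generality because it never differentiates $r$. It therefore covers the point $x=c$, where $\|x-c\|$ is not differentiable and the paper's Hessian is undefined. Taken literally, the paper's argument only gives a PSD Hessian on $\RR^n\setminus\{c\}$, which is not a convex set, so a separate continuity or kink argument is still needed. In one dimension that kink condition is exactly $\varphi'(0)\ge 0$. For example, $\varphi(r)=(r-1)^2$ has $\varphi''>0$ and hence a PSD Hessian away from $x=0$, yet $(|x|-1)^2$ is not convex. Your argument also applies unchanged to the weighted seminorms $\|\cdot\|_{\rho_j}$, and to any $\varphi$ that is merely convex and nondecreasing on $[0,\infty)$, with no twice-differentiability assumption.

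The paper's route, in exchange, shows how each hypothesis enters: $\varphi''$ controls the radial curvature and $\varphi'\nabla^2 r$ the tangential curvature.
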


\begin{pf}
    By the second order convexity condition \cite{boyd2004convex}, the RBF $\varphi$ is convex iff the Hessian is PSD, i.e. $\nabla^2 \varphi (x)~\succeq~0, \, \forall x \in \RR^n$. 
    
    By the chain rule, the gradient is given by
    \[
    \nabla \varphi =  \varphi ' (r)  \nabla r ,
    \]
    and the Hessian is
    \[
    \nabla^2 \varphi  = \varphi '' (r)  \nabla r (\nabla r )^\top  + \varphi ' (r) \nabla^2 r. 
    \]
    The Hessian is PSD iff, $\forall z \in \RR^n$ 
    \begin{equation}
     \begin{split}
     z^\top \nabla^2 \varphi z \geq 0 &\Leftrightarrow \varphi '' (r)  z^\top \nabla r (\nabla r )^\top z  + \varphi ' (r) z^\top \nabla^2 r z \geq 0, \\
     &\Leftrightarrow   \varphi '' (r)  ||(\nabla r )^\top z||^2_2   + \varphi ' (r) z^\top \nabla^2 r z \geq 0,
    \end{split}
    \end{equation}
     By convexity of the norm, $z^\top \nabla^2 r z \geq 0$, and since $||(\nabla r )^\top z||^2_2 \geq 0$, the Hessian is PSD if $\varphi '' (r) \geq 0$ and $\varphi ' (r) \geq 0$ .
\end{pf}

\begin{example}
    An example of a convex RBF is the multiquadric function
\[
\varphi(r) = \sqrt{1 + (\rho r)^2}, 
\]
where $\rho$ is a scaling parameter and $r = \lVert x - c\rVert$.  A function $f$ can thus be approximated as a weighted sum of such RBF as follows
\[
f = \sum_{j=1}^{m} \alpha_j \sqrt{1 + \rho_j^2 (x-c_j)^\top (x-c_j) }. 
\]
\end{example}

\begin{remark}
   There are several approaches to learning parameters $c_j$, $\rho_j$, $\alpha_j$, $\forall j = 1, \hdots, m$.  Given $c_j$ and $\rho_j$, the weights $\alpha_j$ can be obtained by solving a least squares problem. If all parameters are to be optimized together, algorithms similar to training of neural networks can be used \cite{hastie2009elements}. 
\end{remark}

\section{Successive linearization tube-based MPC}
\label{sec:DC-TMPC}

In this section, we leverage the data-driven DC decomposition techniques in Section \ref{sec:DC} to develop a robust TMPC algorithm for the nonlinear system in equation \eqref{eq:sys}. The algorithm is based on successive linearization and exploits the convex structure in the dynamics to bound the effect of the linear approximation. The scheme results in a sequence of computationally tractable convex programs to be solved at each time step. 

As per the classical TMPC paradigm, we consider a two degree of freedom control law of the form 
\begin{equation}
\label{eq:u}
u_k = v_k + K_k x_k, 
\end{equation} 
where the feedback matrix $K_k$ is obtained, e.g. by solving a dynamic programming recursion for the linearized system (see Appendix \ref{app:feedback}) and the feedforward control sequence $v_k$  stabilizes the uncertain\footnote{The uncertainty arises from the linearization (model uncertainty) and from external disturbances (additive uncertainty) as in Section \ref{sec:dist}.} state trajectories within a “tube”  whose cross sections are robustly invariant sets. The tube cross-sections $\X_k$ are parameterized by polytopic sets (see Figure \ref{fig:tube}) with $\nu$ vertices $\V(\X_k) = \{ x_k^{(1)}, \ldots, x_k^{(\nu)}\}$, $\forall k \in [0, \ldots, N]$. The vertices are function of variables $q_k \in \RR^{n_q}$ such that
\[
\X_k  = \{ x : \Gamma x \leq q_k \} = \Co \{ x_k^{(1)}, \ldots, x_k^{(\nu)} \} 
\]
where $\Gamma$ is a fixed matrix. In this paper, we consider two parameterizations:

\begin{itemize}
    \item \textit{Elementwise bounds.} Each element of the state vector is constrained by lower and upper bounds $[q_{1, k}, {q}_{2, k}]$ as follows: $ [q_{1, k}]_l\leq [x_k]_l \leq [q_{2, k}]_l$, $\forall k, l$. Taking $\Gamma = [I \quad -I]^\top$ where $I \in \RR^{n_x \times n_x}$ is the identity matrix and partitioning $q_k = [q_{2, k}^\top  \quad -{q}_{1, k}^\top]^\top$ where $q_{1, k}, {q}_{2, k} \in \RR^{n_x}$, the vertices are expressed as, $\forall k = 0, ..., N$
    \begin{equation}
    \label{eq:element}
    \V(\X_k) = \{ \hat{x} : [\hat{x}]_l = [q_{i, k}]_l, \,  \forall i=1, 2, \, \forall l = 1, ...,n_x \}.
    \end{equation}
    This parameterization results in $|\V(\X_k)|=2^{n_x}$ vertices. 
    \item \textit{Simplex.} A simplex is a polytope with the smallest number of vertices in a given dimension. Let $\Gamma = [-I \quad 1]^\top$ where $I \in \RR^{n_x \times n_x}$ is the identity matrix, $1  \in \mathbb{R}^{n_x}$ is a vector of ones. Partitioning $q_k = [\alpha_k^\top \quad \beta_k]^\top$ where variables $\alpha_k \in \RR^{n_x}$ and $\beta_k \in \RR$, and noting $\sigma_k = \beta_k + 1^\top \alpha_k$, the vertices can be expressed as 
    \begin{equation}
    \label{eq:simplex}
\mathcal{V}(\mathcal{X}_k) = \left\{-\alpha_k, -\alpha_k  + e_1 \sigma_k, \dots, -\alpha_k  + e_{n_x} \sigma_k \right\},
\end{equation}
where $e_1, ..., e_{n_x}$ are standard basis vectors of $\mathbb{R}^{n_x}$. This parameterization results in $|\V(\X_k)|={n_x}+1$ vertices. 
\end{itemize}

Without loss of generality, the system in equation \eqref{eq:sys} can be expressed as
\begin{equation}
\label{eq:f_Q}
f_\Gamma(x_k, v_k + K_k x_k) \leq q_{k+1}, \quad  f_\Gamma \in \mathcal{C}^0
\end{equation}
where $f_\Gamma = \Gamma f $.

\begin{figure}
     \centering
     \includegraphics[width=0.3\textwidth, trim={0cm 0cm 0cm 0cm}, clip]{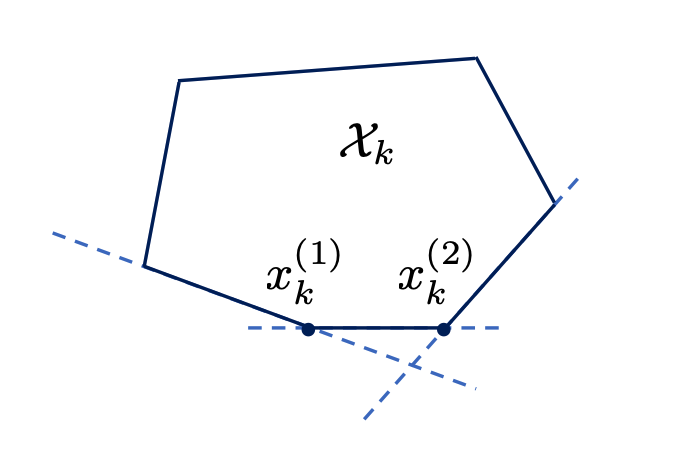}
     \caption{Polytopic tube cross section.}
     \label{fig:tube}
 \end{figure}

 In general, the function $f_\Gamma$ is nonconvex, which would result in an intractable MPC optimization. However, using the DC decomposition techniques in Section \ref{sec:DC}, the $\mathcal{C}^0$ function can be expressed in DC form $f_\Gamma = g - h$ where $g, h$ are convex. Inequality \eqref{eq:f_Q} can thus be relaxed by linearizing the concave part of the DC decomposition around  a predicted trajectory $({\bf x}^\circ, {\bf u}^\circ)$ as follows
\begin{equation}
\label{eq:f_Q_hat}
 g(x_k, v_k + K_k x_k) - \lfloor h ^\circ \rfloor(x_k, v_k + K_k x_k) \leq q_{k+1}.
\end{equation}
By convexity, $\lfloor h ^\circ \rfloor \leq h, \forall x_k$,(see Figure \ref{fig:1Da} for an illustration in $\RR$)
and we thus have the implication \eqref{eq:f_Q_hat} $\Rightarrow$ \eqref{eq:f_Q}. Crucially, inequality \eqref{eq:f_Q_hat} is convex. We discuss computation of the Jacobian linearization of $h$ for specific DC parameterizations in Appendix \ref{app:lin}. 

In addition to the dynamical constraint \eqref{eq:f_Q_hat}, the state and input are constrained as follows
\begin{equation}
\label{eq:ctr}
    x_0 \in \X_0, \quad x_k \in \X, \quad x_N  \in \hat{\X}, \quad v_k + K_k x_k \in \U, 
\end{equation}
for $k \in [0, \ldots, N-1]$ where $\X_0, \X, \hat{\X}, \U$ are known convex  compact sets. The terminal set $\hat{\X}$ can be defined simply as $\hat{\X} = \{ x : ||x||_{\hat{Q}} \leq \hat{\gamma}\}$ where $\hat{\gamma}$ is a terminal set bound. Appendix \ref{app:term} provides a method based on semidefinite programming to compute the terminal parameters $\hat{\X}, \hat{Q}, \hat{\gamma}$  in order to guarantee the asymptotic stability  result of Section \ref{sec:theory}.

The tube $\bX = \{\X_k\}^N_{k=0}$ and feedforward control sequence  $\bv = \{v_k\}^{N-1}_{k=0}$  are variables in an optimal
tracking control problem with constraints \eqref{eq:f_Q_hat}-\eqref{eq:ctr} and worst-case cost defined by
\begin{multline}
\label{eq:obj0}
J(\bv,\bX) = \sum_{k=0}^{N-1} \biggl[ \underset{x_k \in \mathcal{X}_k}{\max} || x_k - x^r_k ||^2_Q \\ + \underset{x_k \in \mathcal{X}_k}{\max} || v_k + K_k x_k  - u^r_k ||^2_R \biggr] + \underset{x_N \in \mathcal{X}_N}{\max} || x_N - x^r_N ||^2_{\hat{Q}},
\end{multline}
where $R \succ 0, Q\succeq 0, \hat{Q} \succeq 0$ and $x_k^r$, $u_k^r$ stand for reference state and input trajectories. This cost involves maximum operations over the sets $\bX$. However, by convexity of the norm and since the sets $\bX$ are polytopes, these operations can be replaced by a discrete number of  evaluations/comparisons of the norm functions at the vertices of the sets $\bX$, which simplifies considerably the problem. This is because a convex function defined on a polytope achieves its maximum at one of the vertices (see Appendix \ref{app:boundary} and Figure \ref{fig:1Db}), so that maximization over a continuous space can be replaced by a discrete search.  Following this idea, is thus tempting to parameterize the optimization problem in terms of the vertices of the tube. 

Evaluation of the worst case cost can be achieved in practice by redefining it using slack variables $\theta, \chi$ and introducing the vertices $\hat{x}_k \in\V(\X_k)$ as new optimization variables as follows
\begin{equation}
\label{eq:obj}
J =   \theta^2_N + \sum_{k = 0}^{N-1} \theta_k^2 + \chi_k^2, 
\end{equation}
subject to $\forall k \in \{0, \ldots, N-1\}$, $\forall \hat{x}_k \in\V(\X_k)$ 
\begin{gather}
     \| \hat{x}_k  - x^r \|_Q \leq \theta_k ,\\
     \| v_k + K_k \hat{x}_k - u^r \|_R \leq \chi_k ,
\end{gather}
and $\forall \hat{x}_N\in\V(\X_N)$
\begin{equation}
\label{eq:obj_end}
      \| \hat{x}_N - x^r \|_{\hat{Q}} \leq \theta_N . 
\end{equation}
%
Note that the cost defined from equations \eqref{eq:obj}-\eqref{eq:obj_end} bounds the cost in equation \eqref{eq:obj0}.  

The dynamic constraint in equation \eqref{eq:f_Q_hat} can also be reformulated in terms of the vertices.  By convexity of $g$, we have
    \begin{equation*}
    \begin{split}
        q_{k+1} &\geq \max_{\hat{x}_k \in \V(\X_k)}  g(\hat{x}_k, v_k + K_k \hat{x}_k) - \lfloor h ^\circ \rfloor(\hat{x}_k, v_k + K_k \hat{x}_k) ,\\
        & \geq \max_{x_k \in \X_k}  g(x_k, v_k + K_k x_k) - \lfloor h ^\circ \rfloor(x_k, v_k + K_k x_k) ,\\
        &\geq g(x_k, v_k + K_k x_k) - \lfloor h ^\circ \rfloor(x_k, v_k + K_k x_k) ,
    \end{split}
    \end{equation*}
since a convex function  defined on a polytope achieves its maximum at one of its vertices. 
This implies that $\forall \hat{x}_k \in \V(\X_k)$, inequality 
\begin{equation}
\label{eq:new_dyna}
g(\hat{x}_k, v_k + K_k \hat{x}_k) - \lfloor h ^\circ \rfloor(\hat{x}_k, v_k + K_k \hat{x}_k) \leq q_{k+1},
\end{equation}
enforces constraint \eqref{eq:f_Q_hat}. 

 \begin{figure}
 \centering
      \begin{subfigure}[b]{0.5\textwidth}
          \centering
          \includegraphics[width=0.6\textwidth, trim={6cm 3cm 6cm 3cm}, clip]{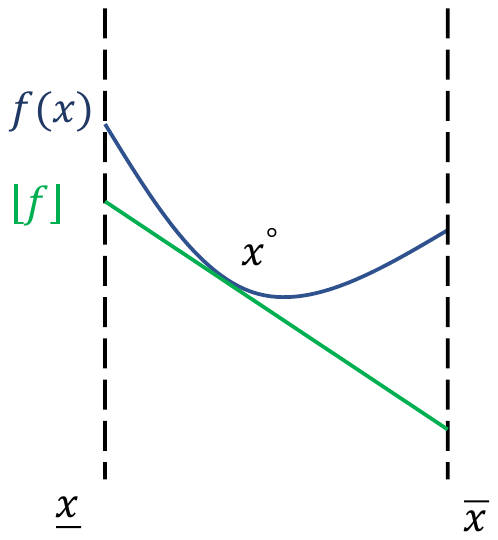}
     \caption{A convex function $f$  (blue) defined on a compact set $[\underline{x}, \overline{x}]$ is bounded from below by its first order linear approximator $\lfloor f^\circ \rfloor$ (green).}
     \label{fig:1Da}
 \end{subfigure}
 \hfill
   \centering
      \begin{subfigure}[b]{0.5\textwidth}
          \centering
     \includegraphics[width=0.6\textwidth, trim={6cm 3cm 6cm 3cm}, clip]{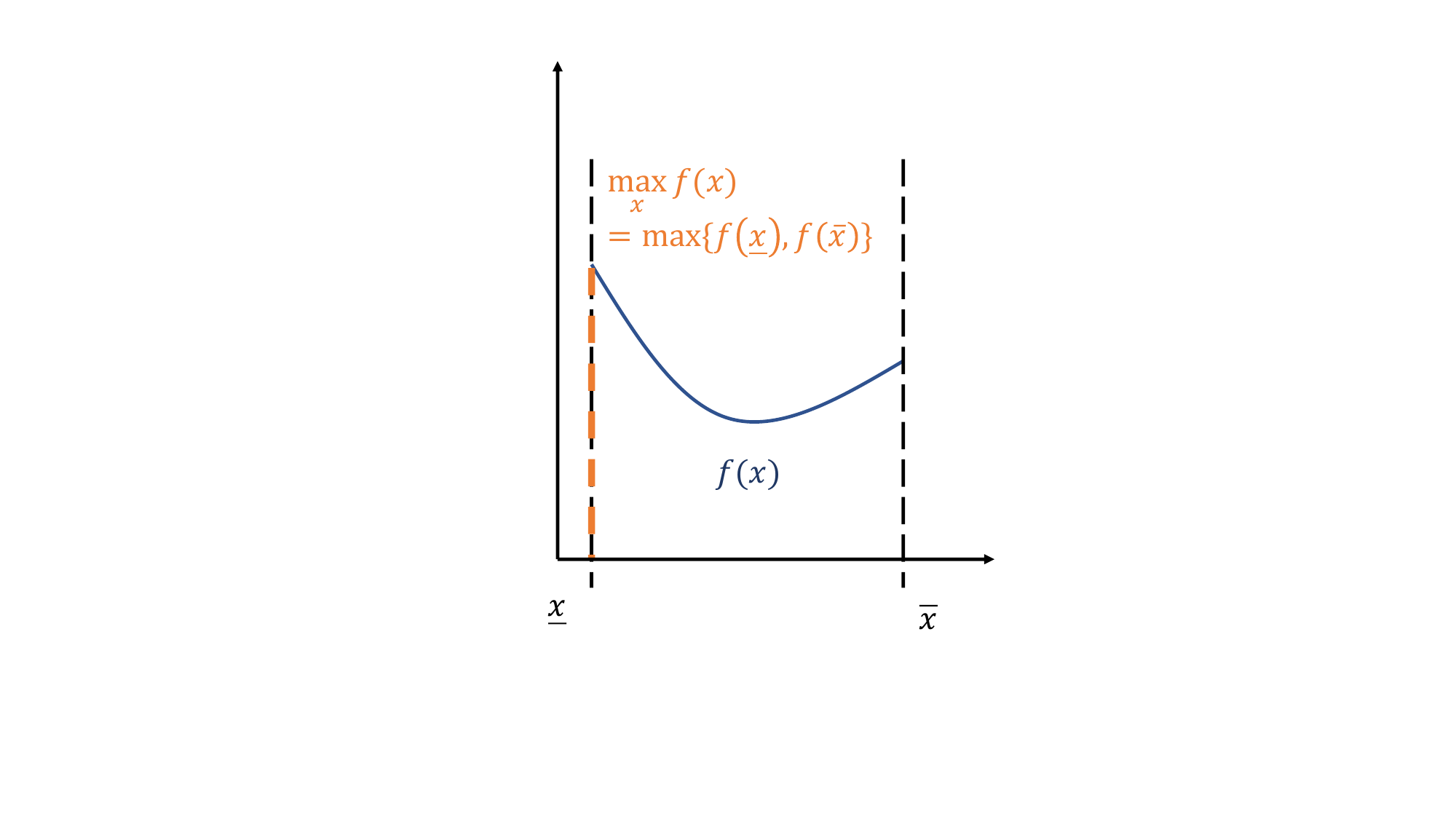}
     \caption{A convex function $f$  (blue) defined on a compact set $[\underline{x}, \overline{x}]$ achieves its maximum (orange) at the boundary of the set.}
     \label{fig:1Db}
 \end{subfigure}
 \caption{Properties of a convex function}
         \label{fig:1D}
 \end{figure}

Finally, by convexity of the sets $\X_0, \X, \hat{\X}, \U$, the state and input constraints in \eqref{eq:ctr} can be reformulated at the vertices as follows, $\forall \hat{x}_k \in  \V(\X_k)$
\begin{equation}
\label{eq:ctr2}
    \hat{x}_0 \in \X_0, \quad \hat{x}_k \in \X, \quad \hat{x}_N  \in \hat{\X}, \quad v_k + K_k \hat{x}_k \in \U.
\end{equation}

Gathering equation \eqref{eq:obj}-\eqref{eq:ctr2}, the MPC optimization can be expressed as the following convex program
\begin{equation}\label{eq:cvx}
\begin{aligned}
& \min_{\bv,\V(\bX), \btheta, \bchi, \bf q} \ \theta_N^2+ \sum_{k = 0}^{N-1} \theta_{k}^2 + \chi_{k}^2
\\
& \quad \text{subject to, $\forall k \in \{0, \ldots, N-1\}$, $\forall \hat{x}_k \in\V(\X_k)$:}
\\ 
& \quad  \| \hat{x}_k  - x^r \|_Q  \leq \theta_k 
\\
& \quad   \| v_k + K_k \hat{x}_k - u^r \|_R  \leq \chi_k
\\
& \quad  g(\hat{x}_k, v_k + K_k \hat{x}_k) - \lfloor h ^\circ \rfloor(\hat{x}_k, v_k + K_k \hat{x}_k) \leq q_{k+1}
\\
& \quad \hat{x}_k \in \X \, 
\\
& \quad v_k + K_k \hat{x}_k \in \U
\\
& \quad  \| \hat{x}_N - x^r \|_{\hat{Q}} \leq  \theta_N \leq  \sqrt{\hat{\gamma}}, \, \forall \hat{x}_N\in\V(\X_N)
\\
& \quad \hat{x}_0  \in \X_0,
\end{aligned}
\end{equation}
where the variables $q_{k}$ are related to $\V(\X_k)$ by equations \eqref{eq:element} or  \eqref{eq:simplex} depending on the choice of polytopic parameterization. 

Problem~(\ref{eq:cvx}) is solved repeatedly and $({\bf x}^\circ,{\bf u}^\circ)$ is updated according to 
\begin{subequations}\label{eq:iter1}%
\begin{alignat}{2}
\label{eq:uvKx}
u^\circ_k &\gets v_k + K_k x_k^\circ, 
\\
\label{eq:iter4}
x^\circ_{k+1} &\gets f(x^\circ_k, u^\circ_k), 
\end{alignat}
\end{subequations}
for $k = 0, \ldots , N-1$, and the process (\ref{eq:cvx})-(\ref{eq:iter1}) is repeated until the maximum number of iterations is reached or $\Delta J = J^{(j, n)}-J^{(j-1, n)}$ has converged to a sufficiently small value, where $J^{(j, n)}$ denotes the objective value at iteration $j$ and time $n$.
The control law is then implemented as
\begin{equation}
\label{eq:step_update1}   
u[n] = u^\circ_{0},
\end{equation}

and at the subsequent discrete time instant we set $x^\circ_0~\gets~x[n+1]$, and redefine $({\bf x}^\circ,{\bf u}^\circ)$ for $k=0,\ldots,N-1$, by
\begin{subequations}\label{eq:step_update3}
\begin{align}
u^\circ_{k} &\gets u^\circ_{k+1}, 
\\
\label{eq:step_update4}
x^\circ_{k+1} &\gets  f(x^\circ_{k}, u^\circ_{k}), 
\\
\label{eq:step_update5_}
 u^\circ_{N-1} &\gets \hat{K}(x^\circ_{N-1} - x^r) + u^r,
\\
\label{eq:step_update5}
x^\circ_{N} &\gets  f(x^\circ_{N-1}, u^\circ_{N-1}).
\end{align}
\end{subequations}
where  $\hat{K}$ is a terminal gain matrix that can be computed e.g. as explained in Appendix \ref{app:term}. These definitions ensure recursive feasibility of the scheme . In addition, the optimal value of $J(\bv,\bX)$ is non-increasing at successive iterations, implying that the iteration converges asymptotically: $\bv \to {\bf u}^\circ - K {\bf x}^\circ$ and $\bX\to \bf{x}^\circ $, and also non-increasing at successive times $n=0,1,\ldots$, implying that $(x,u)= (x^r,u^r)$ is an asymptotically stable equilibrium (see Section \ref{sec:theory} for details).

The TMPC strategy for continuous systems is summarized in Algorithm~\ref{algo6}. 

\begin{remark}
When choosing a parameterization of the tube by means of elementwise bounds, the number of inequalities defining the tube in \eqref{eq:new_dyna}  grows exponentially  with the number of states $n_x$: $|\V (\S)| = 2^{n_x}$ vertices. That growth is further exacerbated if higher dimensional polytopes are used. In practice, most systems have a reasonable number of states. Moreover, prior knowledge of the state coefficients appearing linearly in these inequalities can potentially be exploited to reduce the number of inequalities. Finally, if computational burden is an issue, simplex sets can be used that scale linearly with the number of states: only $n_x + 1$ inequalities are needed.
\end{remark}

\begin{remark}
\begin{sloppypar}
The present TMPC strategy requires knowledge of a feasible predicted trajectory at time $n=0$, namely a sequence $({\bf x}^\circ, {\bf u}^\circ)$ satisfying $(x^\circ_k,u^\circ_k)\in\X\times\U$, $k=0,\ldots,N-1$ and $\|x_N^\circ-x^r\|_{\hat{Q}}^2\leq \hat{\gamma}$. 
An iterative approach to determining a feasible trajectory can be constructed by relaxing some of the constraints of~(\ref{eq:cvx}) and performing successive optimizations to minimize constraint violations.
For example, given a trajectory $({\bf x}^\circ, {\bf u}^\circ)$ of (\ref{eq:sys}) that satisfies ${(x^\circ_k,u^\circ_k)\in\X\times\U}$, $k=0,\ldots,N-1$, but not the terminal constraint $\|x_N^\circ-x^r\|_{\hat{Q}}^2\leq \hat{\gamma}$, a suitable iteration can be constructed by replacing $\hat{\gamma}$ in (\ref{eq:cvx}) with an optimization variable $\gamma$ and replacing the objective of (\ref{eq:cvx}) with $\gamma$. Repeatedly solving this convex program and updating $({\bf x}^\circ, {\bf u}^\circ)$ using (\ref{eq:iter1}a-b) will generate a convergent sequence of predicted trajectories with non-increasing values of $\|x_N^\circ-x^r\|_{\hat{Q}}^2$, which can be terminated when $\gamma \leq \hat{\gamma}$ is achieved.
\end{sloppypar}
\end{remark}

\begin{algorithm}
\caption{Successive linearization tube-based MPC.}
\label{algo6}
\SetKwBlock{kwInit}{Initialization}{end}
\SetKwBlock{kwOnline}{Online at times $n=0,1,\ldots$}{end}
    \kwInit{
      Compute $\hat{K}$, $\hat{Q}$, $\hat{\gamma}$ as in Appendix \ref{app:term}
    }
    \kwOnline{
      Set $x_0^\circ \gets x[n]$
      \\
      \eIf{$n = 0$}{
        Find $({\bf x}^\circ, {\bf u}^\circ)$ such that $x^\circ_{k+1} = f(x^\circ_k,u^\circ_k)$, $(x_k^\circ, u_k^\circ)\in\X\times\U$ for $k=0,\ldots,N-1$, and $\| x_N^\circ - x^r \|_{\hat{Q}}^2\leq\hat{\gamma}$
    }{
    Compute $({\bf x}^\circ, {\bf u}^\circ)$ using (\ref{eq:step_update3}a-d)
    }
    Set $ \Delta J \gets \infty$, $j \gets 0$
    \\
    \While{$ \Delta J > $ tolerance $\&$ $j < $ maxIters}{
    Linearize $h$ around $({\bf x}^\circ, {\bf u}^\circ)$ to obtain $\lfloor h^\circ \rfloor$\\
    Compute $\{K_k\}^{N-1}_{k=0}$ as in Appendix \ref{app:feedback}\\
    Solve (\ref{eq:cvx}) to find the optimal $\bv$ and $\bX$ \\
    Update $({\bf x}^\circ, {\bf u}^\circ)$ using (\ref{eq:iter1}a-b)\\
    Update the objective:
    $J^{(j,n)} \gets J(\bv,\bX)$\\
    Update the iteration counter: $j \gets j+1$
    }
Update the control input: $u[n] \gets u^\circ_0$\\
}
\end{algorithm}

\section{Additive disturbances}
\label{sec:dist}

The algorithm presented in this paper can also be applied (with minor adjustments) to systems subject to additive uncertainty
\begin{equation}
\label{eq:disturbance}
x[n+1] = g(x[n], u[n]) - h(x[n], u[n]) + w[n], \quad w[n] \in \mathcal{W}, 
\end{equation}
where $g, h$ are convex, $w$ is a bounded disturbance and $\mathcal{W} \subset\RR^{n_x}$ is a known compact set.   

\subsection{Terminal weighting matrix computation}  The method in Appendix \ref{app:term} has to be adapted to account for the impact of the disturbance on the terminal parameters. Equation \eqref{eq:Q_N} is relaxed $\forall x\in\hat{\X}$ by \begin{multline}\label{eq:Q_Nbis}
\lVert x-x^r\rVert_{\hat{Q}}^2 \geq 
\lVert f(x,\hat{K}(x - x^r)+u^r) - x^r\rVert_{\hat{Q}}^2 
\\+ \lVert x - x^r \rVert_Q^2
+ \lVert \hat{K}( x - x^r) \rVert_R^2  - \beta,
\end{multline}
where $\hat{Q} \succ 0$ is computed for some $\beta \geq 0$. The reader is referred to \cite{L4DC} for a method to compute $\hat{Q}$  and the other terminal parameters in this scenario. 

\subsection{Predicted trajectory} Since the system is perturbed by additive disturbances whose future magnitude is uncertain, the trajectories of system  \eqref{eq:disturbance} under a known control law are uncertain. Therefore, the tube bounding predicted states cannot converge to a single trajectory as in the undisturbed case. To guarantee recursive feasibility at each iteration of the scheme, a feasible predicted trajectory should be computed as part of a feasible tube as suggested in \cite{thesis} to enforce constraint satisfaction under all realizations of the additive uncertainty. However this modification would complexify the algorithm significantly. As an alternative, we propose a heuristic based on a backtracking line search, under the assumption that predicted trajectories closer to the last feasible one are more likely to remain feasible. If the optimization problem at time $n>0$ becomes infeasible due to external disturbances affecting the current state of the system, the problem can be reattempted by redefining the optimal control sequence obtained in equation (\ref{eq:step_update3}a) by 
\begin{equation}
\label{eq:line_search}
\begin{pmatrix}x^\circ_0 \\ \bf {v}^\circ\end{pmatrix} \gets  \begin{pmatrix}\bar{x}^\circ_0 \\ \bf \bar{v}^{\circ}\end{pmatrix} + \rho \begin{pmatrix}x^\circ_0 - \bar{x}^\circ_0 \\  \bf v^\circ - \bf \bar{v}^{\circ}\end{pmatrix},
\end{equation}
before updating the initial guess with equations (\ref{eq:step_update3}b-d), where $\rho \in ]0, 1[$, $\bar{x}^\circ_0$ denotes the state at the current timestep in the most recently obtained feasible state sequence, and $\bf \bar{v}^{\circ}$ stands for the last feasible feedforward control sequence starting from the current timestep. If the first iteration is infeasible, $\bv ^{\circ}$ is set to the tail of the feasible feedforward control sequence at previous time $n-1$ (i.e. $ (v_1^\circ, \ldots, v_{N-1}^\circ, u^r)$); if there exists a feasible iteration at the current timestep, $\bv ^{\circ}$ is set to the feedforward control sequence from the most recent such iteration. This process is repeated until the optimization problem can be solved. In the limit of infinite iterations, the new guess state and control sequences tend to either the necessarily feasible tails at time $n-1$, or the feasible sequences from the most recent feasible iteration at the current timestep. It is worth noting that $\rho$ need not be a constant, indeed the only condition is that $\rho \in ]0, 1[$.

\subsection{Optimization with additive disturbance} Optimization problem \eqref{eq:cvx} can be modified  to account for the external disturbance as follows

\begin{equation}\label{eq:cvx_dist}
\begin{aligned}
& \min_{\bv,\V(\bX), \btheta, \bchi, \bf q} \ \theta_N^2+ \sum_{k = 0}^{N-1} \theta_{k}^2 + \chi_{k}^2
\\
& \quad \text{subject to, $\forall k \in \{0, \ldots, N-1\}$, $\forall \hat{x}_k \in\V(\X_k)$:}
\\ 
& \quad  \| \hat{x}_k  - x^r \|_Q  \leq \theta_k 
\\
& \quad   \| v_k + K_k \hat{x}_k - u^r \|_R  \leq \chi_k
\\
& \quad  g(\hat{x}_k, v_k + K_k \hat{x}_k) - \lfloor h ^\circ \rfloor(\hat{x}_k, v_k + K_k \hat{x}_k) + \bar{w}_{k}\leq q_{k+1}
\\
& \quad \hat{x}_k \in \X \, 
\\
& \quad v_k + K_k \hat{x}_k \in \U
\\
& \quad  \| \hat{x}_N - x^r \|_{\hat{Q}} \leq  \theta_N \leq  \sqrt{\hat{\gamma}}, \, \forall \hat{x}_N\in\V(\X_N)
\\
& \quad \hat{x}_0  \in \X_0,
\end{aligned}
\end{equation}
where $\bar{w}_{k} = \max_{w_k \in \mathcal{W}} \Gamma w_k$ depend on the tube parameterization and can trivially be computed via a discrete search over all vertices of $\mathcal{W}$ if $\mathcal{W}$ is a polytopic convex set. 

\subsection{Algorithm with additive disturbances}
\label{sec:dist2}
In the presence of external disturbance, the successive linearization tube-based MPC procedure is given in Algorithm \ref{algo7}. This algorithm is as described in Section \ref{sec:dist}, with one important note. In the case where \texttt{maxIters}~$=~1$, the step where $x_0^\circ \gets x[n]$ is skipped to ensure recursive feasibility of the algorithm, as the new trajectory generated from keeping the tail of the previous timestep's trajectory is guaranteed to be feasible. This is identical to skipping the first $x_0^\circ \gets x[n]$ step and immediately entering a backtracking algorithm with $\rho \rightarrow 1$.

\subsection{Accounting for modelling error in f}
\label{sec:dist3}
There always exists some modelling error when finding an arbitrarily close approximation to $f = g - h$. To account for this uncertainty, the disturbance set is augmented via the Minkowski sum ${\mathcal{W} = \overline{\mathcal{W}} + \mathcal{E}}$, where $\overline{\mathcal{W}}$ is a polytopic convex set of additive disturbances and $\mathcal{E}$ is a polytopic convex set of maximum absolute modelling errors. The resulting set $\mathcal{W}$ remains polytopic and convex since the Minkowski sum preserves these properties \cite{minkowskisum}.

\begin{algorithm}[ht]
\caption{Successive linearization tube-based MPC (with external disturbances)}
\label{algo7}
\SetKwBlock{kwInit}{Initialization}{end}
\SetKwBlock{kwOnline}{Online at times $n=0,1,\ldots$}{end}
    \kwInit{
      Compute $\hat{K}$, $\hat{Q}$, $\hat{\gamma}$ as in \cite{L4DC}
    }
    \kwOnline{
      \eIf{$n = 0$}{
        Set $x_0^\circ \gets x[n]$ \\
        Find $({\bf x}^\circ, {\bf u}^\circ)$ such that $x^\circ_{k+1} = f(x^\circ_k,u^\circ_k)$, $(x_k^\circ, u_k^\circ)\in\X\times\U$ for $k=0,\ldots,N-1$, and $\| x_N^\circ - x^r \|_{\hat{Q}}^2\leq\hat{\gamma}$
    }{
    \If{maxIters $ > 1$}{
      Set $x_0^\circ \gets x[n]$
    }
    Compute $({\bf x}^\circ, {\bf u}^\circ)$ using (\ref{eq:step_update3}a-d)
    }
    Set $ \Delta J \gets \infty$, $j \gets 0$
    \\
    \While{$ \Delta J > $ tolerance $\&$ $j < $ maxIters}{
    Linearize $h$ around $({\bf x}^\circ, {\bf u}^\circ)$ to obtain $\lfloor h^\circ \rfloor$\\
    Compute $\{K_k\}^{N-1}_{k=0}$ as in Appendix \ref{app:feedback}\\
    Solve (\ref{eq:cvx_dist}) to find the optimal $\bv$ and $\bX$ \\
    Set $\rho$\\
    \While{ $\bf v = \{\emptyset\}$ $\&$ $n>0$}{
    Recompute $({\bf x}^\circ, {\bf v}^\circ)$ using \eqref{eq:line_search} and (\ref{eq:iter1}b-d)\\
    Recompute ${\bf u}^\circ$ using \eqref{eq:uvKx}\\
    Reattempt solving (\ref{eq:cvx_dist})\\
    }
    Update $({\bf x}^\circ, {\bf u}^\circ)$ using (\ref{eq:iter1}a-b)\\
    Update the objective:
    $J^{(j,n)} \gets J(\bv,\bX)$\\
    Update the iteration counter: $j \gets j+1$
    }
Update the control input: $u[n] \gets u^\circ_0$\\
}
\end{algorithm}

\section{Theoretical results}
\label{sec:theory}
This section establishes some important properties of the proposed algorithms. The results of this Section apply as long as the nonlinear dynamics can be approximated arbitrarily closely by the DC approximation model. If the approximation errors are significant, they can be estimated and treated as bounded additive disturbances. 

We first consider the case of no external disturbance, then provide modifications of the theorems in case of additive disturbances. 

\subsection{No external disturbances}

We first show that $({\bf x}^\circ,{\bf u}^\circ)$ is a feasible predicted trajectory at every iteration of Algorithm~\ref{algo6} and at all times $n\geq 0$. This ensures firstly that problem~(\ref{eq:cvx}) is recursively feasible, and second that
the while loop in line 12 of Algorithm \ref{algo6} can be terminated after any number of iterations without compromising the stability of the MPC law, i.e.~the approach allows \textit{early termination} of the MPC optimization.

\begin{proposition}[Feasibility]\label{prop:feas}
Assuming initial feasibility at time $n=0$ and iteration $j=0$, problem~(\ref{eq:cvx}) in Algorithm~\ref{algo6} is feasible  at all times $n > 0$ and all iterations $j > 0$.
\end{proposition}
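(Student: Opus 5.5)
The argument is by induction, first over iterations $j$ at a fixed time $n$ and then over times $n$. The invariant is that at the start of every iteration the current predicted trajectory $({\bf x}^\circ,{\bf u}^\circ)$ satisfies three conditions: it obeys the model, $x^\circ_{k+1}=f(x^\circ_k,u^\circ_k)$; it is admissible, $(x^\circ_k,u^\circ_k)\in\X\times\U$ with $x^\circ_0 = x[n]\in\X_0$; and it meets the terminal condition $\|x^\circ_N-x^r\|_{\hat{Q}}^2\le\hat{\gamma}$. Given such a trajectory, I will exhibit an explicit feasible point of (\ref{eq:cvx}) built around the linearization point. The tubes are degenerate, $\X_k=\{x^\circ_k\}$. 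For elementwise bounds this means $q_{1,k}=q_{2,k}=x^\circ_k$, so all vertices coincide with $x^\circ_k$. For the simplex it means $\alpha_k=-x^\circ_k$ and $\sigma_k=0$. The feedforward is $v_k=u^\circ_k-K_kx^\circ_k$, and the slacks $\theta_k,\chi_k,\theta_N$ are set equal to the corresponding norms.

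Checking this point is direct. Every vertex equals $x^\circ_k$, and at the linearization point $\lfloor h^\circ\rfloor(x^\circ_k,u^\circ_k)=h(x^\circ_k,u^\circ_k)$. Applying $\Gamma$ to the model constraint, its left-hand side becomes $g(x^\circ_k,u^\circ_k)-h(x^\circ_k,u^\circ_k)=\Gamma f(x^\circ_k,u^\circ_k)=\Gamma x^\circ_{k+1}$, which is $\le q_{k+1}$ with equality under the degenerate parameterization. The state, input and initial constraints hold by admissibility of the trajectory. The terminal constraint $\theta_N\le\sqrt{\hat{\gamma}}$ holds by the terminal condition. Hence (\ref{eq:cvx}) is feasible whenever the invariant holds.

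Propagation within a fixed time $n$ follows. Take any optimal, or merely feasible, solution $(\bv,\bX)$ at iteration $j$. The update (\ref{eq:iter1}) gives $u^\circ_k=v_k+K_kx^\circ_k$ and $x^\circ_{k+1}=f(x^\circ_k,u^\circ_k)$. I then show inductively that $x^\circ_k\in\X_k$: this holds for $k=0$ since $x^\circ_0=x[n]$ must lie in $\X_0$, and the convex constraint implies (\ref{eq:f_Q}), because $\lfloor h^\circ\rfloor\le h$ and the max over vertices dominates the max over $\X_k$, so that $\Gamma f(x^\circ_k,v_k+K_kx^\circ_k)\le q_{k+1}$, i.e. $x^\circ_{k+1}\in\X_{k+1}$. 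The vertex constraints and convexity of $\X,\U,\hat{\X}$ then transfer admissibility and the terminal bound to the new trajectory, which restores the invariant for iteration $j+1$.

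The main obstacle is the step from time $n$ to time $n+1$, because of the shifted trajectory (\ref{eq:step_update3}). Without disturbances, $x[n+1]=f(x[n],u^\circ_0)=x^\circ_1$, so the shifted states coincide with the old $x^\circ_1,\dots,x^\circ_N$, and they are admissible except possibly the new final pair. For that pair, $x^\circ_{N-1}$ (the old $x^\circ_N$) lies in $\hat{\X}$. Admissibility of $(x^\circ_{N-1},\hat{K}(x^\circ_{N-1}-x^r)+u^r)$ and the terminal bound for the new $x^\circ_N$ must then come from the terminal properties in Appendix~\ref{app:term}: $\hat{\X}\subseteq\X$, the terminal control law satisfies $\U$ on $\hat{\X}$, and invariance of $\hat{\X}$ follows from the decrease inequality (\ref{eq:Q_N}). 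I would invoke these as the defining properties of the terminal parameters; the remaining work is verifying that the appendix indeed guarantees them.
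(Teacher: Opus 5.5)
Your proposal is correct and follows the same route as the paper. A feasible predicted trajectory yields the feasible point $(\bv,\bX)=(\mathbf{u}^\circ-K\mathbf{x}^\circ,\{\mathbf{x}^\circ\})$ of \eqref{eq:cvx}, and feasibility of the trajectories produced by \eqref{eq:iter1} and \eqref{eq:step_update3} propagates by induction over iterations and times. Where the paper merely asserts this propagation, you justify it: the updated trajectory stays in the optimized tube since $\lfloor h^\circ\rfloor\le h$ and the vertex maximum dominates, and the shifted tail is feasible via $\hat{\X}\subseteq\X$, admissibility of the terminal law, and invariance of $\hat{\X}$ from \eqref{eq:Q_N}.
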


\begin{pf}
If $({\bf x}^\circ, {\bf u}^\circ)$ is a feasible trajectory, namely a trajectory of (\ref{eq:sys}) satisfying $(x^\circ_k,u^\circ_k)\in\X\times\U$, $k=0,\ldots,{N-1}$ and $\|x_N^\circ-x^r\|_{\hat{Q}}^2\leq \hat{\gamma}$, then problem (\ref{eq:cvx}) is feasible since $(\bv,\bX) = ( \bf u^\circ - K x^\circ, \{\bf x^\circ \})$ trivially satisfies all constraints of (\ref{eq:cvx}).
Furthermore, line 7 of Algorithm~\ref{algo6} requires that $({\bf x}^\circ, {\bf u}^\circ)$ is a feasible trajectory at $n=0$, and thus ensures that~(\ref{eq:cvx}) in line 15 is feasible for $n=0$, $j=0$.
Therefore $({\bf x}^\circ, {\bf u}^\circ)$ computed in lines 9 and 16 of Algorithm~\ref{algo6} are necessarily feasible trajectories for all $n\geq 0$ and $j\geq 0$.  \end{pf}


We next show that, at a given time $n$, the optimal values of the objective function for problem (\ref{eq:cvx}) computed at successive iterations of Algorithm \ref{algo6} are non-increasing.

\begin{lemma}\label{lem:convergence}
The cost $J^{j,n}$ in line 17 of Algorithm~\ref{algo6} satisfies  
$J^{j+1,n} \leq J^{j,n}$ for all $j\geq 0$.
\end{lemma}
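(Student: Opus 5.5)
The plan is the standard convex--concave procedure argument: the optimal solution at iteration $j$, once re-expressed through the trajectory update (\ref{eq:iter1}), will be shown to be a feasible point of problem (\ref{eq:cvx}) at iteration $j+1$ with cost no larger than $J^{j,n}$. Optimality at iteration $j+1$ then gives $J^{j+1,n}\le J^{j,n}$.

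First, fix time $n$ and let $(\bv^*,\bX^*,\btheta^*,\bchi^*,{\bf q}^*)$ be optimal for (\ref{eq:cvx}) at iteration $j$. It is linearized about $({\bf x}^\circ,{\bf u}^\circ)$ with gains $K_k$, and has value $J^{j,n}$. Let $({\bf x}^{\circ\prime},{\bf u}^{\circ\prime})$ be produced by (\ref{eq:iter1}), so that $x^{\circ\prime}_0=x^\circ_0=x[n]$, $u^{\circ\prime}_k=v^*_k+K_kx^{\circ\prime}_k$ and $x^{\circ\prime}_{k+1}=f(x^{\circ\prime}_k,u^{\circ\prime}_k)$.

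Second, show by induction on $k$ that $x^{\circ\prime}_k\in\X^*_k$. The base case holds because $\X_0^*$ contains $x[n]$. For the step, since $x^{\circ\prime}_k\in\X_k^*$, the convex-over-polytope argument preceding (\ref{eq:new_dyna}) together with $\lfloor h^\circ\rfloor\le h$ gives
\[
\Gamma x^{\circ\prime}_{k+1}=g(x^{\circ\prime}_k,u^{\circ\prime}_k)-h(x^{\circ\prime}_k,u^{\circ\prime}_k)\le g-\lfloor h^\circ\rfloor\le q^*_{k+1}.
\]
The same vertex-maximum property bounds every cost term and state/input constraint evaluated at $x^{\circ\prime}_k$ by its values at the vertices of $\X_k^*$. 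Hence $x^{\circ\prime}_k\in\X$, $u^{\circ\prime}_k\in\U$, $\|x^{\circ\prime}_k-x^r\|_Q\le\theta^*_k$, $\|u^{\circ\prime}_k-u^r\|_R\le\chi^*_k$, and $\|x^{\circ\prime}_N-x^r\|_{\hat Q}\le\theta^*_N\le\sqrt{\hat\gamma}$.

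Third, at iteration $j+1$ the problem is relinearized about $({\bf x}^{\circ\prime},{\bf u}^{\circ\prime})$, possibly with new gains $K'_k$. Take the candidate consisting of the degenerate tube $\X_k=\{x^{\circ\prime}_k\}$ (all vertices equal to $x^{\circ\prime}_k$), $v_k=u^{\circ\prime}_k-K'_kx^{\circ\prime}_k$, and $(\btheta,\bchi)=(\btheta^*,\bchi^*)$. The linearization is exact at the expansion point, $\lfloor h^{\circ\prime}\rfloor(x^{\circ\prime}_k,u^{\circ\prime}_k)=h(x^{\circ\prime}_k,u^{\circ\prime}_k)$, so the dynamic constraint reduces to $\Gamma x^{\circ\prime}_{k+1}\le q_{k+1}$, and this holds with $q_{k+1}=\Gamma x^{\circ\prime}_{k+1}$. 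The remaining constraints hold by the second step. The candidate is therefore feasible with cost at most $\theta_N^{*2}+\sum_k(\theta_k^{*2}+\chi_k^{*2})=J^{j,n}$, and so $J^{j+1,n}\le J^{j,n}$.

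The main obstacle is the bookkeeping in the second step. It requires correctly invoking the fact that a convex function on a polytope attains its maximum at a vertex, applied componentwise to $g-\lfloor h^\circ\rfloor$ composed with the affine map $x\mapsto(x,v^*_k+K_kx)$. It also requires checking that the degenerate (single-point) tube is admissible under the vertex parameterizations (\ref{eq:element}) and (\ref{eq:simplex}); for the simplex this means $\sigma_k=0$. The change of gains $K_k\to K'_k$ causes no difficulty because the candidate is defined directly through $u^{\circ\prime}_k$.
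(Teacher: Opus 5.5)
Your proposal is correct and follows the paper's argument: at iteration $j+1$ you use the degenerate tube along the updated trajectory, with $\bv={\bf u}^\circ-K{\bf x}^\circ$, as a feasible suboptimal point, and its cost cannot exceed $J^{j,n}$ because that trajectory lies inside the iteration-$j$ tube. The paper's proof only asserts these facts, whereas you verify them explicitly: the induction placing the trajectory in the old tube via $\lfloor h^\circ\rfloor\le h$ and the vertex-maximum property, exactness of the linearization at the expansion point, admissibility of the singleton tube under both parameterizations, and the recomputed gains $K'_k$.
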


\begin{pf}
This is a consequence of the recursive feasibility of problem~(\ref{eq:cvx}) in line 15 of Algorithm~\ref{algo6} as demonstrated by Proposition~\ref{prop:feas}. Specifically, 
$J(\bv,\bX)$ is the maximum of a quadratic cost evaluated over the predicted tube, so the feasible suboptimal solution $(\bv,\bX) = ( \bf u^\circ - K x^\circ, \{\bf x^\circ \})$ at iteration $j+1$ must give a cost no greater than $J^{j,n}$. Therefore the optimal value at $j+1$ satisfies $J^{j+1,n} \leq J^{j,n}$.
\end{pf}

Lemma~\ref{lem:convergence} implies that $J^{j,n}$ converges to a limit as $j\to\infty$, and this is the basis of the following result, which implies convergence of the linearization error to zero.
\begin{proposition}[Convergence]\label{prop:convergence}
The solution of~(\ref{eq:cvx}) in Algorithm~\ref{algo6} at successive iterations at a given time $n$ satisfies $(\bv,\bX) \to ( \bf u^\circ - K x^\circ, \{\bf x^\circ \})$ as ${j\to\infty}$.
\end{proposition}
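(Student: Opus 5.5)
The plan is to build on Lemma~\ref{lem:convergence}: the sequence $J^{j,n}$ is non-increasing and bounded below by zero, so it converges to some limit $J^{\infty,n}$ and $J^{j,n}-J^{j+1,n}\to 0$. What has to be shown is that this vanishing decrease forces the optimal tube $\bX$ to collapse onto the predicted trajectory $\{{\bf x}^\circ\}$ generated by (\ref{eq:iter1}a-b), and $\bv$ onto ${\bf u}^\circ - K{\bf x}^\circ$.

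\textbf{Step 1: quantify the decrease.} Let $(\bv^{(j)},\bX^{(j)})$ be optimal at iteration $j$. Define $({\bf x}^\circ,{\bf u}^\circ)$ by (\ref{eq:iter1}) from $\bv^{(j)}$, starting at $x_0^\circ=x[n]$. By induction, $x_k^\circ\in\X_k^{(j)}$ for every $k$. For $k=0$ this follows from $\hat x_0\in\X_0$ with $\X_0=\{x[n]\}$. For the inductive step, use (\ref{eq:f_Q_hat}) together with $\lfloor h^\circ\rfloor\le h$, which gives $\Gamma f(x_k^\circ,u_k^\circ)\le q_{k+1}$. The degenerate tube $\{{\bf x}^\circ\}$ is feasible at iteration $j+1$, as in Proposition~\ref{prop:feas}, and its cost $J_\circ^{j}$ satisfies $J^{j+1,n}\le J_\circ^j\le J^{j,n}$. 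The middle inequality holds because each max in (\ref{eq:obj0}) over a set containing $x_k^\circ$ dominates the value at $x_k^\circ$. Hence $J^{j,n}-J_\circ^j\to 0$, i.e.\ the gap between the worst-case cost over $\X_k^{(j)}$ and the cost at the single point $x_k^\circ$ vanishes.

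\textbf{Step 2: convert vanishing cost gap into vanishing tube size.} For each $k<N$ the input stage term $\max_{x\in\X_k}\|v_k+K_kx-u^r\|_R^2 - \|v_k+K_kx_k^\circ-u^r\|_R^2$ is nonnegative. Since $R\succ0$ it is strictly positive unless $K_k(\X_k-x_k^\circ)$ is trivial, and the same holds for the state terms whenever $Q\succ0$ (and $\hat Q\succ0$ at $k=N$). I would use strong convexity of $\|\cdot\|^2_Q$: for $x\in\X_k$, take the reflected direction about $x_k^\circ$ (or use the vertex achieving the max). This gives $\max_{x\in\X_k}\|x-x^r\|_Q^2-\|x_k^\circ-x^r\|_Q^2\ge \lambda_{\min}(Q)\,c\,\mathrm{diam}(\X_k)^2$ up to a linear term, which requires care. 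Concretely, for any $x\in\X_k$, one of $x$ and the other vertices lies on the far side of $x_k^\circ$ relative to the gradient, so convexity yields a lower bound by $\|x-x_k^\circ\|_Q^2$ at some vertex. Compactness of $\X$ and $\U$ keeps all constants uniform in $j$.

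\textbf{Step 3: conclude.} The previous step gives $\mathrm{diam}(\X_k^{(j)})\to0$, and since $x_k^\circ\in\X_k^{(j)}$ this means $\X_k^{(j)}\to\{x_k^\circ\}$. Then (\ref{eq:uvKx}) gives $\bv^{(j)}={\bf u}^\circ-K{\bf x}^\circ$ by construction, so $(\bv,\bX)\to({\bf u}^\circ-K{\bf x}^\circ,\{{\bf x}^\circ\})$.

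The main obstacle is Step 2. The reflection argument is exact only when the tube is roughly symmetric about $x_k^\circ$, and $Q$ may be merely semidefinite. If that fails, I would fall back on the $R\succ0$ term and on propagation through the dynamics: a non-degenerate $\X_{k}$ forces a non-degenerate $\X_{k+1}$ via the vertex constraints, until reaching $\X_N$ with $\hat Q\succ0$.
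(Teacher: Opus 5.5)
\textbf{Verdict: Step 2 of your proposal fails, and the fallback does not rescue it.}

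Your Step 1 is correct, and it is sharper than the paper's Lemma~\ref{lem:convergence}. The updated trajectory lies in $\X_k^{(j)}$ because $x\mapsto g(x,v_k+K_kx)-\lfloor h^\circ\rfloor(x,v_k+K_kx)$ is convex and $\lfloor h^\circ\rfloor\le h$. Its degenerate tube is therefore feasible at iteration $j+1$, and $J^{j+1,n}\le J^j_\circ\le J^{j,n}$ holds.

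The proof breaks at Step 2, and the failure is not merely technical. The stage gap $\max_{x\in\X_k}\|x-x^r\|_Q^2-\|x_k^\circ-x^r\|_Q^2$ vanishes whenever $x_k^\circ$ is itself a maximizer, for instance the vertex of $\X_k$ farthest from $x^r$. This happens however large $\X_k$ is, and the same holds for the $R$- and $\hat Q$-gaps. Concretely, take $n_x=1$, $Q=1$, $x^r=0$, $\X_k=[1,2]$ and $x_k^\circ=2$: the gap is $0$ while the diameter is $1$. No reflection or strong-convexity argument can exclude this. Your fallback through $k=N$ with $\hat Q\succ0$ fails for the same reason. Hence $J^{j,n}-J^j_\circ\to0$ says nothing about tube size.

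In fact, cost information alone cannot force collapse. With elementwise bounds and diagonal $\hat Q$, stretching $\X_N=\{x_N^\circ\}$ towards $x^r$ (when $x_N^\circ\neq x^r$) changes neither the cost nor the feasibility of (\ref{eq:cvx}). The claim must therefore be read for the minimal tube generated by $\bv$. That tube is always feasible and no more costly, so it can be selected without loss.

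\textbf{What is missing: show that successive iterates approach each other.} The width of the minimal tube is governed by the linearization errors of $g$ and $h$ between the new trajectory and the linearization trajectory. So you must first prove that the steps between iterates vanish. This is the step the paper (tersely) draws from $Q\succ0$, $R\succ0$ before invoking optimality of the tube.

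The useful gap is the other one in your chain, $J^j_\circ-J^{j+1,n}\to0$.
\begin{itemize}
\item The degenerate tube on the trajectory $({\bf x}^{(j)},{\bf u}^{(j)})$ generated at iteration $j$ is a feasible point at iteration $j+1$.
\item Along the segment from the optimum to this point, the convex objective contains the strongly convex term $\|v_0+K_0x[n]-u^r\|_R^2$, since the cross-section at $k=0$ is a point. This gives $J^j_\circ-J^{j+1,n}\ge\|u_0^{(j+1)}-u_0^{(j)}\|_R^2$.
\item For $k\ge1$, the stage-$k$ $R$-term controls the distance from $u_k^{(j)}$ to the nearest vertex input $v_k+K_k\hat x_k$ of the new cross-section. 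That suffices once the cross-section is known to be small, so you can propagate stage by stage.
\item The induction uses continuity of $f$, uniform smallness of the linearization errors of $g,h$ at nearby points of the compact constraint sets, and boundedness of $K_k$. It yields vanishing steps and hence a vanishing minimal tube.
\end{itemize}

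Finally, your Step 3 turns the $\bv$-part of the claim into an identity, because you define ${\bf x}^\circ$ as the freshly updated trajectory. In the paper's reading the substantive content is convergence of those iterates, which your argument never establishes.
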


\begin{pf}
From 
$J^{j,n} \geq 0$ and $J^{j+1,n} \leq J^{j,n}$ for all $j$, it follows that $J^{j,n}$ converges to a limit as $j\to\infty$. Since $Q\succ 0$ and $R\succ 0$, this implies that $\bv \to \bf u^\circ - K x^\circ$ and $\bf x^\circ$ in the update (\ref{eq:iter1}a-d) in line 16 must also converge.
Furthermore, since $(\bv,\bX)$ minimizes the worst-case cost evaluated for the predicted tube, the optimal solutions of (\ref{eq:cvx}) must satisfy $\bX \to \{ \bf x^\circ\}$ as $j\to\infty$.
\end{pf}


We finally show that under Algorithm \ref{algo6}, the closed loop is asymptotically stable. 

\begin{theorem}[Asymptotic stability] 
\label{th:AS}
For a system in DC form controlled by Algorithm \ref{algo6}, $x=x^r$ is an asymptotically stable equilibrium with region of attraction equal to the set of initial states $x[0]$ for which a feasible initial trajectory $({\bf x}^\circ,{\bf u}^\circ)$ exists.
\end{theorem}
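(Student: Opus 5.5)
We would prove Theorem~\ref{th:AS} by a standard Lyapunov argument, using the optimal cost of~(\ref{eq:cvx}) at the final iteration of each time step as the Lyapunov function. Let $J^\ast[n]$ denote the cost $J^{(j,n)}$ returned when the while loop of Algorithm~\ref{algo6} terminates at time $n$, after any number $j\ge 1$ of iterations. Proposition~\ref{prop:feas} shows that~(\ref{eq:cvx}) stays feasible for all $n$. Hence $J^\ast[n]$ is well defined on the set of initial states admitting a feasible initial trajectory, and this set will be the region of attraction. Lower bound: since $R\succ 0$ and $Q\succ0$ (or $Q\succeq 0$ with detectability), $J^\ast[n]\ge \|x[n]-x^r\|_Q^2+\|u[n]-u^r\|_R^2$, because $x[n]\in\X_0$ and $u[n]=u_0^\circ$ is generated from the tube. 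Upper bound near $x^r$: inside the terminal set, the trajectory generated by $\hat K$ is feasible, and~(\ref{eq:Q_N}) gives $J^\ast[n]\le \|x[n]-x^r\|^2_{\hat Q}$. These two bounds supply the class-$\mathcal{K}$ sandwich needed for stability in the sense of Lyapunov.

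The key step is the decrease condition $J^\ast[n+1]\le J^\ast[n]-\|x[n]-x^r\|_Q^2-\|u[n]-u^r\|_R^2$. We would show it as follows. At time $n+1$, the shifted trajectory~(\ref{eq:step_update3}a-d) is an exact trajectory of~(\ref{eq:sys}), since there is no disturbance and $x[n+1]=f(x_0^\circ,u_0^\circ)=x_1^\circ$. Its tail lies in $\X\times\U$, and its last element is produced by $\hat K$ from $x^\circ_{N-1}$, which lies in $\hat{\X}$. So, as in the proof of Proposition~\ref{prop:feas}, the degenerate tube $(\bv,\bX)=({\bf u}^\circ-K{\bf x}^\circ,\{{\bf x}^\circ\})$ is feasible for~(\ref{eq:cvx}) at iteration $j=0$ of time $n+1$. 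We then compare its cost with $J^\ast[n]$. The optimal tube at time $n$ contains the trajectory ${\bf x}^\circ$ obtained in the update~(\ref{eq:iter1}), since $x_0^\circ\in\X_0$ and the relaxed dynamics bound~(\ref{eq:f_Q_hat}) implies $f(x_k^\circ,u_k^\circ)\in\X_{k+1}$. The worst-case stage costs therefore dominate the stage costs along ${\bf x}^\circ$. Dropping the first stage and using the terminal inequality~(\ref{eq:Q_N}) to absorb the appended stage and the new terminal cost gives the decrease above. Lemma~\ref{lem:convergence} then says that further iterations at time $n+1$ can only lower the cost, so the inequality survives early termination.

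Summing the decrease inequality yields $\sum_n \|x[n]-x^r\|_Q^2+\|u[n]-u^r\|_R^2\le J^\ast[0]<\infty$, so $x[n]\to x^r$. Combined with the Lyapunov bounds, this gives asymptotic stability on the stated region of attraction.

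The main obstacle is the comparison step, namely checking that the cost of the trajectory ${\bf x}^\circ$ propagated after the final iteration is bounded by the optimal tube cost $J^\ast[n]$. This requires showing that the true trajectory lies inside the optimized tube, which follows from $\lfloor h^\circ\rfloor\le h$ and the vertex argument. It also requires care with the terminal condition: we need $x_N^\circ\in\hat{\X}$ and~(\ref{eq:Q_N}) to hold along the true, rather than linearized, dynamics. We would handle the terminal set by checking its invariance under $\hat K$ using the Appendix construction.
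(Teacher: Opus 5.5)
Your decrease argument is the same as the paper's. You take the shifted trajectory (\ref{eq:step_update3}a-d) as a singleton tube $(\bv,\bX)=({\bf u}^\circ-K{\bf x}^\circ,\{{\bf x}^\circ\})$, use the descent property \eqref{eq:Q_N} to absorb the appended stage, and use Lemma~\ref{lem:convergence} to obtain $J^{j_{n+1},n+1}\le J^{0,n+1}$. Your containment step is correct: from $\lfloor h^\circ\rfloor\le h$ you get $f(x_k^\circ,u_k^\circ)\in\X_{k+1}$, so the propagated trajectory costs at most $J^{j_n,n}$. This is exactly what the paper's one-line ``we obtain'' leaves implicit.

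The step that fails is your upper bound $J^\ast[n]\le\|x[n]-x^r\|_{\hat{Q}}^2$ for $x[n]\in\hat{\X}$, which you use for the class-$\mathcal{K}$ sandwich. At time $n>0$, problem~(\ref{eq:cvx}) is linearized about the shifted trajectory, and a singleton tube along any other trajectory is infeasible for it. The reason is as follows. A singleton cross-section $\X_{k+1}=\{x_{k+1}\}$ leaves no slack in $q_{k+1}$. Meanwhile \eqref{eq:f_Q_hat} requires $\Gamma f(x_k,u_k)+(h-\lfloor h^\circ\rfloor)(x_k,u_k)\le q_{k+1}$. So the nonnegative linearization error must vanish at $(x_k,u_k)$, which for strictly convex components forces $(x_k,u_k)=(x_k^\circ,u_k^\circ)$. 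This is why Proposition~\ref{prop:feas} only certifies the singleton tube on the linearization trajectory itself. Consequently, the singleton tube along the $\hat{K}$ trajectory from $x[n]$ is not an admissible candidate. More generally, $J^\ast[n]$ depends on the warm start and not on $x[n]$ alone, and it admits no such state-dependent bound.

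What you actually need is a bound at $n=0$ only.
\begin{itemize}
\item If $x[0]\in\hat{\X}$, which holds for $x[0]$ near $x^r$, choose the $\hat{K}$ closed-loop trajectory as the initial trajectory in line 7 of Algorithm~\ref{algo6}.
\item The singleton tube along it is then feasible at $j=0$, and telescoping \eqref{eq:Q_N} gives $J^{0,0}\le\|x[0]-x^r\|_{\hat{Q}}^2$.
\item Combining this with your lower bound, Lemma~\ref{lem:convergence} and the decrease inequality gives $\|x[n]-x^r\|_Q^2\le J^\ast[n]\le J^\ast[0]\le\|x[0]-x^r\|_{\hat{Q}}^2$.
\end{itemize}
This is the $\epsilon$-$\delta$ estimate for Lyapunov stability. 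Attractivity then follows from your summation argument, as in the paper.

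Without such an initialization rule, $J^\ast[0]$ need not be small for $x[0]$ near $x^r$, since line 7 allows any feasible trajectory. The paper's proof does not address this at all; it asserts that $J^{j,n}$ is positive definite in $x[n]-x^r$ and invokes Lyapunov's direct method. Supplying an upper bound is therefore the right instinct, but it has to be anchored at $n=0$ through the choice of initial trajectory rather than derived at each $n$.
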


\begin{pf}
Let $({\bf x}^\circ,{\bf u}^\circ)$ be the trajectory computed in line 9  of Algorithm~\ref{algo6} at $n+1$, for any initial state such that line 7 is feasible. Then, setting $(\bv,\bX) \to ( \bf u^\circ - K x^\circ, \{\bf x^\circ \})$, and using $\X_0=\{x^\circ_0\}$ and the descent property (\ref{eq:Q_N}) of $\hat{Q}$, we obtain 
\[
J(\bv,\bX) \leq J^{j_{n},n} - \|x[n] - x^r\|_Q^2 - \|u[n] - u^r\|_R^2, 
\]
where $j_{n}$ is the final iteration count of Algorithm~\ref{algo6} at time $n$. The optimal solutions of~(\ref{eq:cvx}) at time $n+1$ therefore give
\[
J^{j_{n+1},n+1}\leq J^{0,n+1}\leq J^{j_n,n} - \|x[n] - x^r\|_Q^2 - \|u[n] - u^r\|_R^2.
\]
Furthermore $J^{j,n}$ is a positive definite function of $x[n] - x^r$ since $Q\succ 0$, $R\succ 0$, and Lyapunov's direct method therefore implies that $x=x^r$ is asymptotically stable. \end{pf}

\subsection{Additive disturbances}
In the presence of additive disturbances, the following theorems apply. 

\begin{proposition}[Feasibility with disturbance]
\label{prop:feas_dist}
    Assuming initial feasibility at time $n=0$ and iteration $j=0$, problem~(\ref{eq:cvx_dist}) in Algorithm~\ref{algo7} is feasible  at all times $n > 0$ and all iterations $j > 0$.
\end{proposition}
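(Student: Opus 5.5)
The plan mirrors the proof of Proposition~\ref{prop:feas}, splitting into iterations within a time step and the passage from time $n$ to $n+1$. The key certificate is the tube around a feasible nominal trajectory. If $({\bf x}^\circ,{\bf u}^\circ)$ is a trajectory of the nominal model $g-h$ with $(x^\circ_k,u^\circ_k)\in\X\times\U$ and terminal bound satisfied, I would use the tube generated by propagating $x_0^\circ$ under the nominal dynamics plus $\bar w_k$ through the linearized constraint. This is the ``feasible tube'' idea of \cite{thesis}. I would use it as the certificate instead of the degenerate tube $\{{\bf x}^\circ\}$ used for (\ref{eq:cvx}), because the single-trajectory tube does not satisfy the constraint $g-\lfloor h^\circ\rfloor+\bar w_k\le q_{k+1}$ when $\bar w_k\neq 0$.

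\emph{Within a time step, iterations $j\to j+1$.} The optimal solution of (\ref{eq:cvx_dist}) at iteration $j$ is feasible for the problem linearized about the new trajectory (\ref{eq:iter1}). The argument is as follows.
\begin{itemize}
\item The new trajectory lies inside the optimal tube $\bX$. Since $\lfloor h^\circ\rfloor\le h$, the actual nominal successor satisfies $\Gamma x^\circ_{k+1}\le q_{k+1}$, so by induction $x^\circ_k\in\X_k$.
\item Relinearizing $h$ about points inside the tube keeps the old tube feasible. The convex relaxation is exact at the linearization point, so the difference $h-\lfloor h^{\circ,\mathrm{new}}\rfloor$ can be controlled by the same vertex/convexity argument used to derive (\ref{eq:new_dyna}).
\end{itemize}
The vertex reformulation of the state, input and terminal constraints (\ref{eq:ctr2}) is unchanged, so feasibility is retained at iteration $j+1$.

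\emph{Across time steps, $n\to n+1$.} I would split into two cases.
\begin{itemize}
\item \texttt{maxIters}$=1$. Algorithm~\ref{algo7} skips $x_0^\circ\gets x[n]$ and shifts the trajectory via (\ref{eq:step_update3}). The shifted tube, with terminal element generated by $\hat K$, is feasible. The first $N-1$ cross sections are inherited. The last cross section follows from the terminal inequality (\ref{eq:Q_Nbis}): it keeps $\|x-x^r\|^2_{\hat Q}\le\hat\gamma$ invariant under the disturbance provided $\beta$ and $\hat\gamma$ are chosen as in \cite{L4DC}. This I would take as given from the terminal design.
\item General case with $x_0^\circ\gets x[n]$. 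Since $w[n]\in\mathcal W$ and the previous tube satisfied $\Gamma x_1\le q_1$ with $\bar w_0$ included, the true state $x[n+1]$ lies in $\X_1$ of the previous tube. Hence the tail tube, with $\X_0$ containing $x[n+1]$, is feasible. If the solver nevertheless reports infeasibility for the relinearized guess, the backtracking (\ref{eq:line_search}) produces guesses converging, as $\rho$-steps accumulate, to the last feasible sequence. Feasibility is an open or closed condition in the guess, so by continuity of $g$ and $\lfloor h^\circ\rfloor$ in the linearization point it is eventually regained. At worst it is regained in the limit case, which coincides with the \texttt{maxIters}$=1$ argument.
\end{itemize}

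The main obstacle is the last step. Backtracking only yields feasibility in the limit, and the constraint set depends on the guess through $\lfloor h^\circ\rfloor$, so one must ensure finite termination. I would first try to argue that the tail is feasible with a Slater-type margin (strictness from $\bar w_k$ overbounding), giving an open neighbourhood of guesses that remain feasible. Failing that, I would state the result with the limit interpretation used in Section~\ref{sec:dist2}.
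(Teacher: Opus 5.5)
The paper gives no argument of its own here; its proof is only a pointer to \cite{L4DC}. So your proposal has to stand on its own, and its central step fails. You claim that the optimal tube at iteration $j$, and across time steps its shifted tail, stays feasible after $h$ is relinearized about the updated trajectory \eqref{eq:iter1}. But the left-hand side $g-\lfloor h^\circ\rfloor+\bar w_k$ of the dynamics constraint in \eqref{eq:cvx_dist} depends on the linearization point, and two tangent planes of a convex $h$ are not ordered at the vertices of a cross-section. For example, take scalar $h(x)=x^2$ and $\X_k=[-1,1]$. The tangent at $0$ gives $\lfloor h^\circ\rfloor(-1)=0$, whereas the tangent at $1/2$ (a point inside the tube) gives $-5/4$. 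The vertex bound therefore increases by $5/4$, and the old $q_{k+1}$, which is typically active at the optimum, is violated. Exactness at the new linearization point controls the error only at that point, so the vertex/convexity argument behind \eqref{eq:new_dyna} gives no comparison with the old $q_{k+1}$. In addition, Algorithm~\ref{algo7} recomputes $K_k$ from \eqref{eq:dp} at every iteration, so the old tube is not certified at all for the new closed loop $u=v_k+K_kx$. In Proposition~\ref{prop:feas} both issues vanish, because the certificate is the singleton tube $\{{\bf x}^\circ\}$, where the new linearization is exact and $K_k$ plays no role. With $\bar w_k\neq 0$ there is no singleton certificate, as you note, and it is precisely the positive width of the tube that makes the change of linearization point and gains matter. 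Your alternative certificate, propagating $\bar w_k$ through the new linearized constraint around ${\bf x}^\circ$, satisfies the dynamics constraint by construction. However, you give no reason why its cross-sections satisfy $\hat x_k\in\X$, $v_k+K_k\hat x_k\in\U$ and the terminal bound. Feasibility of the nominal trajectory says nothing about a tube of positive width around it without a margin assumption.

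The time-step argument inherits the same defect: the inherited cross-sections were certified for the old $\lfloor h^\circ\rfloor$ and $K_k$, not for those used at $n+1$. The terminal step also needs robust invariance of $\{x:\|x-x^r\|_{\hat Q}^2\le\hat\gamma\}$ under $\mathcal W$, together with a cross-section of the fixed form $\Gamma x\le q$ whose vertices lie inside it. Inequality \eqref{eq:Q_Nbis} with $\beta>0$ does not provide this, so it must be stated as an assumption imported from \cite{L4DC}. The backtracking cannot repair the gap either. Even when $\lfloor h^\circ\rfloor$ and $K_k$ depend continuously on the guess (which already fails for ReLU-based DC models, whose Jacobians are discontinuous), the set of guesses for which \eqref{eq:cvx_dist} is feasible is closed but in general not open. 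Convergence of the guesses \eqref{eq:line_search} to a feasible limit therefore gives no finite-step feasibility without a strict (Slater-type) margin, which nothing in the setup supplies. Moreover, the limit itself is feasible only if the relinearization claim above were true. A limit-only version would also be weaker than the proposition, which asserts feasibility at every $n$ and $j$. The paper itself calls the backtracking a heuristic and says a guarantee would require generating the predicted trajectory as part of a feasible tube, as in \cite{thesis}. What your proof is missing is exactly that ingredient: a certificate tube valid for the \emph{current} linearization and gains, together with margins or robust invariance ensuring it satisfies $\X$, $\U$ and the terminal constraint.
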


\begin{pf}
    See \cite{L4DC} for a proof.  
\end{pf}

\begin{theorem}[Stability with disturbance]
\label{th:quad_stability}
The control law of Algorithm~\ref{algo7} ensures that the system \eqref{eq:disturbance} satisfies the state and input constraints and the following quadratic stability condition on the average stage cost:
\[
\lim_{t \rightarrow \infty} \frac{1}{t} \sum_{n=0}^{t-1}  ||x[n] - x^r||^2_Q + ||u[n] - u^r||^2_R \leq \beta, 
\]
where $\beta$ satisfies equation \eqref{eq:Q_Nbis}.
\end{theorem}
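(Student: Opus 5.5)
Constraint satisfaction follows from Proposition~\ref{prop:feas_dist}. Problem~(\ref{eq:cvx_dist}) is feasible at every time $n$ and every iteration $j$, and every feasible solution defines a tube $\bX$ satisfying the following. First, $x[n]\in\X_0$. Second, the vertex constraints~(\ref{eq:ctr2}) hold. Third, by convexity of $g$ and the lower bound $\lfloor h^\circ\rfloor\le h$, the tightened dynamic constraint containing $\bar w_k$ implies $\Gamma\bigl(f(x_k,v_k+K_kx_k)+w\bigr)\le q_{k+1}$ for all $x_k\in\X_k$ and all $w\in\mathcal W$. Hence $x[n]\in\X$ and $u[n]=v_0+K_0x[n]\in\U$ for every realisation of the disturbance. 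Modelling error is absorbed into $\mathcal W$ as in Section~\ref{sec:dist3}.

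For the average-cost bound I would build a Lyapunov-type decrease inequality with additive slack $\beta$, mirroring the proof of Theorem~\ref{th:AS}. Let $J^{j_n,n}$ be the cost returned at time $n$. At time $n+1$, take the candidate obtained by shifting the tube, $(\X_1,\ldots,\X_N,\X_{N+1})$. The new terminal cross-section $\X_{N+1}$ is the image of $\X_N$ under the terminal law $\hat K(x-x^r)+u^r$ plus $\mathcal W$. Feasibility of this candidate, including the line-search fallback~(\ref{eq:line_search}) that returns to the feasible tail, is exactly what Proposition~\ref{prop:feas_dist} provides. Its cost loses the first stage term, which is at least $\|x[n]-x^r\|_Q^2+\|u[n]-u^r\|_R^2$ because $x[n]\in\X_0$ and the stage costs are worst-case over $\X_0$. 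It gains a terminal stage that is bounded using~(\ref{eq:Q_Nbis}). Applying~(\ref{eq:Q_Nbis}) vertexwise, and using convexity to pass to the maximum over $\X_N$, the new terminal term plus the added stage cost is at most the old terminal term plus $\beta$. Lemma~\ref{lem:convergence} (iterations are non-increasing) then gives
\[
J^{j_{n+1},n+1}\le J^{j_n,n}-\|x[n]-x^r\|_Q^2-\|u[n]-u^r\|_R^2+\beta .
\]

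Summing this inequality from $n=0$ to $t-1$ telescopes to
\[
\sum_{n=0}^{t-1}\bigl(\|x[n]-x^r\|_Q^2+\|u[n]-u^r\|_R^2\bigr)\le J^{j_0,0}-J^{j_t,t}+t\beta\le J^{j_0,0}+t\beta,
\]
since $J\ge 0$. Dividing by $t$ and letting $t\to\infty$ gives the claim, with $\limsup$ read in place of $\lim$ if the limit is not known to exist.

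The main obstacle is the terminal step. The term $\bar w$ enlarges the predicted terminal set, so I must check two things: that the shifted tube still satisfies $\theta_N\le\sqrt{\hat\gamma}$, which needs robust invariance of $\hat\X$ under $\hat K$ and $\mathcal W$, and that the $\beta$ in~(\ref{eq:Q_Nbis}) really dominates the extra worst-case cost due to $w$. I would take both from the terminal construction of \cite{L4DC}: choose $\beta$ so that~(\ref{eq:Q_Nbis}) holds for the perturbed successor $f+w$ over all $w\in\mathcal W$, and choose $\hat\gamma$ so that $\hat\X$ is robustly invariant.
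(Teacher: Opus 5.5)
Your proposal follows the paper's proof. The paper also takes the one-step bound $J^{j_{n+1},n+1}\le J^{j_n,n}-\|x[n]-x^r\|_Q^2-\|u[n]-u^r\|_R^2+\beta$ as a direct consequence of \eqref{eq:Q_Nbis} and then telescopes and divides by $t$. Your telescoping, which discards $J^{j_t,t}\ge 0$ and reads $\limsup$, is cleaner than the paper's, which writes $J^{j_{t-1},t-1}$ where $J^{j_t,t}$ belongs. Your shifted-tube justification of the bound goes beyond what the paper writes down, and you are right to rest its terminal step on \cite{L4DC}: applying \eqref{eq:Q_Nbis} pointwise only bounds the maximum of the sum of the new stage and terminal terms, not the sum of the separately maximized slacks ($\theta$, $\chi$, and the terminal term over the outer-bounding polytope $\X_{N+1}$) that appear in the candidate cost.
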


\begin{pf}
 From \eqref{eq:Q_Nbis} the cost satisfies
\[
J^{j_{n+1},n+1}\leq J^{j_n,n} - \|x[n] - x^r\|_Q^2 - \|u[n] - u^r\|_R^2 + \beta,
\]
where $J^{j_n,n}$ is the cost at final iteration for time $n$. This yields after summation, and by the telescopic sum
\begin{multline*}
 \frac{1}{t}(J^{j_{t-1},t-1} - J^{j_0,0} ) \\+ \frac{1}{t} \sum_{n=0}^{t-1} (\|x[n] - x^r\|_Q^2 + \|u[n] - u^r\|_R^2)  \leq   \beta.
\end{multline*}
Finally taking the limit as $t \to \infty$ yields

\[
 \lim_{t \to \infty} \frac{1}{t} \sum_{n=0}^{t-1} (\|x[n] - x^r\|_Q^2 + \|u[n] - u^r\|_R^2) \leq  \beta, 
\]
which guarantees a finite bound on the averaged stage cost.
\end{pf}

It follows from Theorem \ref{th:quad_stability} that the objective of the MPC optimization is finite because the cost function has a finite number of stages and each stage cost is finite.

\section{Numerical results}
\label{sec:results}
The simulation code for the tube-based MPC with DCNN decomposition is available at \url{https://github.com/martindoff/DC-NN-MPC}.

\subsection{Case study}
We consider the model of a planar vertical take-off and landing (PVTOL) aircraft as follows \cite{mark}
\begin{equation*}
    \ddot{y} = (u_1 + g) \sin \alpha, \quad  \ddot{z} = (u_1 + g) \cos \alpha - g, \quad \ddot{\alpha} = u_2,
\end{equation*}
where $y, z, \alpha$ are the horizontal, vertical and angular displacement, $g=9.81 \, m/s$ is the acceleration due to gravity, $u_1, u_2$ are proportional to the net thrust and torque acting on the aircraft. The system is constrained as follows: $|u_1| \leq 10, |u_2| \leq 10$, $|\alpha| \leq 3$, $|\dot{\alpha}| \leq 1$, $|\dot{y}| \leq 30$, $|\dot{z}| \leq 10$. The dynamics are discretized with time step $\delta$  and the state is defined as $x = [ y \quad  z \quad  \alpha \quad \dot{y} \quad  \dot{z} \quad \dot{\alpha}]^\top$ and input as $u = [u_1 \quad u_2]^\top$. The penalty parameters are taken as $Q= \text{diag}(10, 1, 1, 1)$, $R = \text{diag}(1\mathrm{e}{-4}, 1\mathrm{e}{-3})$, the bounds on terminal state and input (cf. Appendix \ref{app:term}) are $\delta^x = [3\mathrm{e}{-2} \quad 1 \quad 1 \quad 1\mathrm{e}{-1}]^\top$, $\delta^u = [1 \quad 1]^\top$, the initial conditions are $x_0 = [ 0.1 \quad 0 \quad 0 \quad 0]^\top$, and the system is stabilized around the origin.

To exploit the linear structure in the dynamics, in what follows we only decompose the nonconvex part of the dynamics
\begin{equation}
\label{eq:noncvx}
f = \begin{bmatrix}
    \ddot{y} \\ \ddot{z}
\end{bmatrix} = \begin{bmatrix}
    (u_1 + g) \sin \alpha \\ (u_1 + g) \cos \alpha - g
\end{bmatrix}, 
\end{equation}
and treat the rest of the dynamics as linear constraints in optimization problem \eqref{eq:cvx}.

\subsection{DC decomposition}

We first analyse the performance of the different DC decomposition methods in Section \ref{sec:DC} to approximate the nonconvex part of the PVTOL dynamics in equation \eqref{eq:noncvx} as $f = g-h$ where $g, h$ are convex (we then exploit \textit{a priori} knowledge of the uncertainty set parameterization matrix $\Gamma$ in \eqref{eq:f_Q} to obtain a set of convex inequalities \eqref{eq:new_dyna}). 

\subsubsection{SOS-convex polynomials}

We fit two polynomials of degree $2d = 6$ on $100,000$ samples generated from the PVTOL horizontal and vertical dynamics $\ddot{y}$ and  $\ddot{z}$. The results of the DC decompositions are shown in Figures \ref{fig:DC1} and \ref{fig:DC2}.  $100$ test samples were generated and the mean absolute error (MAE) between the polynomial approximation and the mathematical model was $0.027$ for $\ddot{y}$ and $0.06$ for $\ddot{z}$.
\begin{figure}
    \centering
    \includegraphics[width=0.5\textwidth, trim={0cm 0cm 0cm 0cm}, clip]{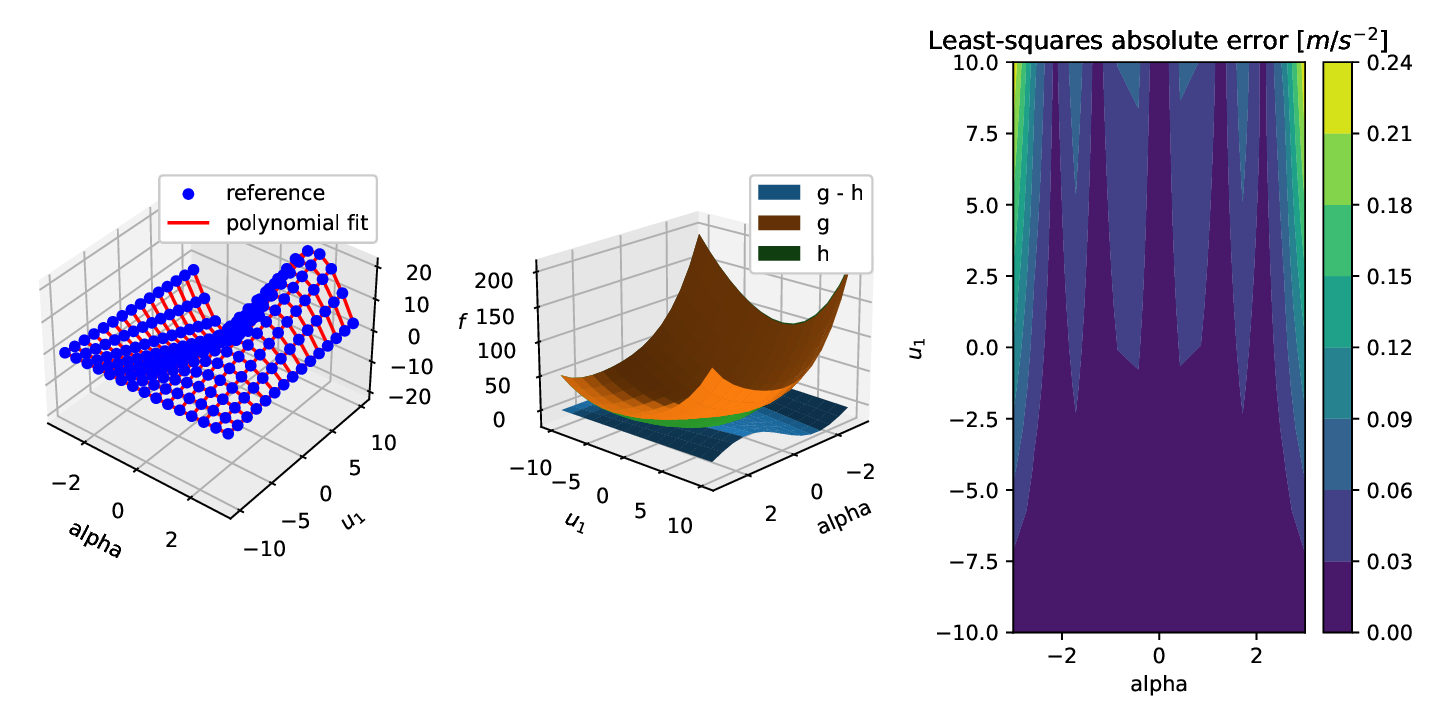}
    \caption{Approximation of the PVTOL horizontal dynamics as a difference of SOS-convex polynomials.}
    \label{fig:DC1}
\end{figure}

\begin{figure}
    \centering
    \includegraphics[width=0.5\textwidth, trim={0cm 0cm 0cm 0cm}, clip]{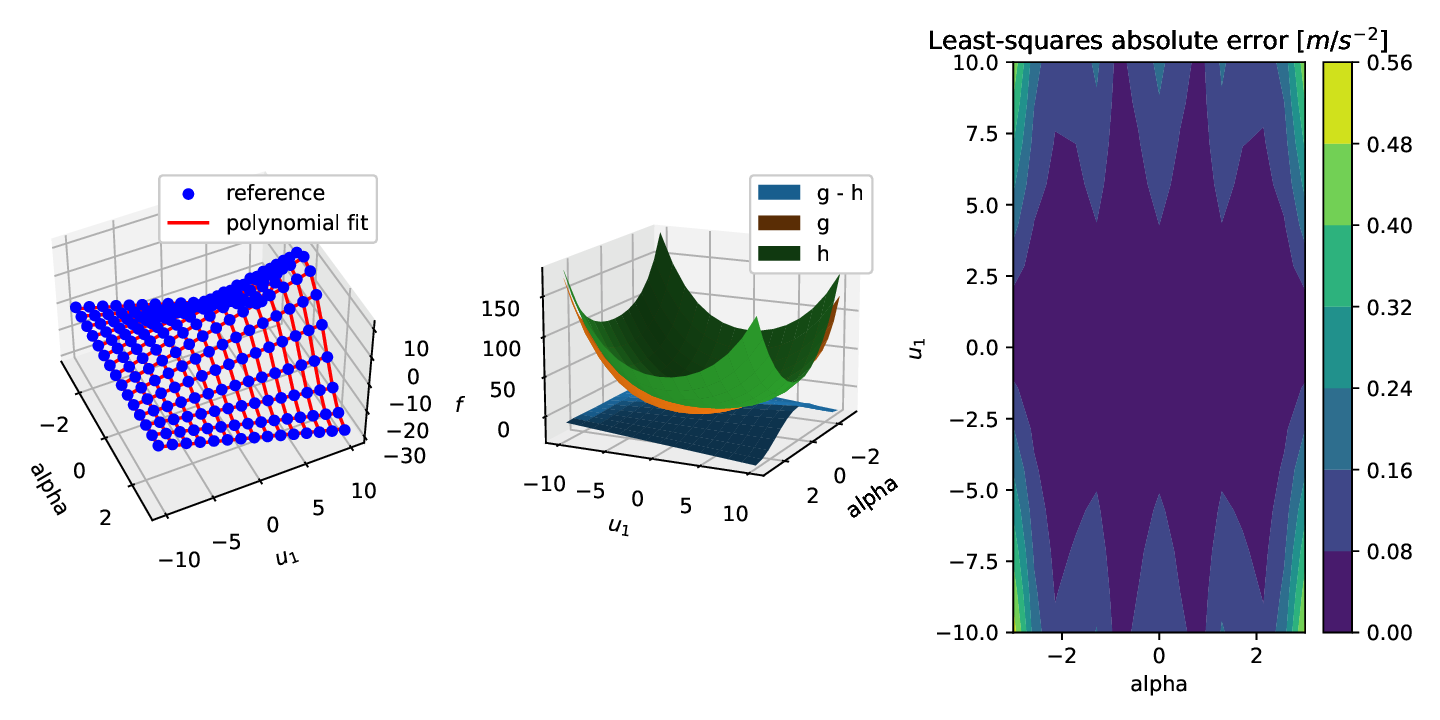}
    \caption{Approximation of the PVTOL vertical dynamics as a difference of SOS-convex polynomials.}
    \label{fig:DC2}
\end{figure}

\subsubsection{Input-convex neural networks}
The DCNN architecture was leveraged in the present case study of the PVTOL aircraft model using the open-source neural-network library Keras \cite{chollet2015keras}. The network has $1$ hidden layer, $64$ units and ReLU activation and was trained on 100,000 input-output samples generated randomly from the PVTOL model with the RMSprop optimizer, MSE loss, batch size of 32, 200 epochs. An accuracy of $0.08$ was obtained on 100 test samples with a MAE metrics (see Figure \ref{fig:err}).  The results of the approximation of the nonconvex dynamic function of the PVTOL aircraft by a DCNN  are presented in Figure \ref{fig:DC0}. The nonconvex dynamics (blue) are approximated by the DCNN as a difference of two convex functions (green and orange dots). 

\begin{figure}
    \centering
    \includegraphics[width=0.5\textwidth, trim={2cm 0cm 0cm 0cm}, clip]{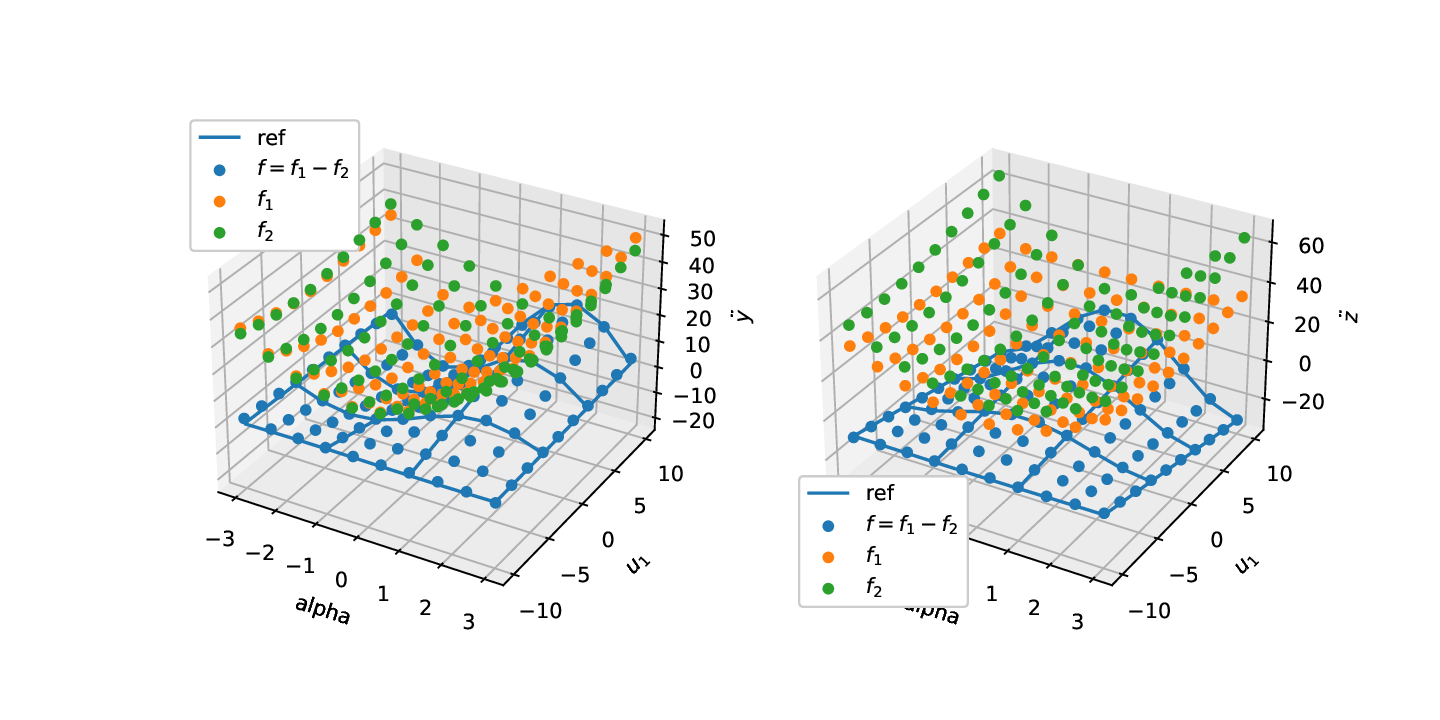}
    \caption{Approximation of the PVTOL model as a difference of convex functions by a DCNN.}
    \label{fig:DC0}
\end{figure}

\begin{figure}
    \centering
    \includegraphics[width=0.5\textwidth]{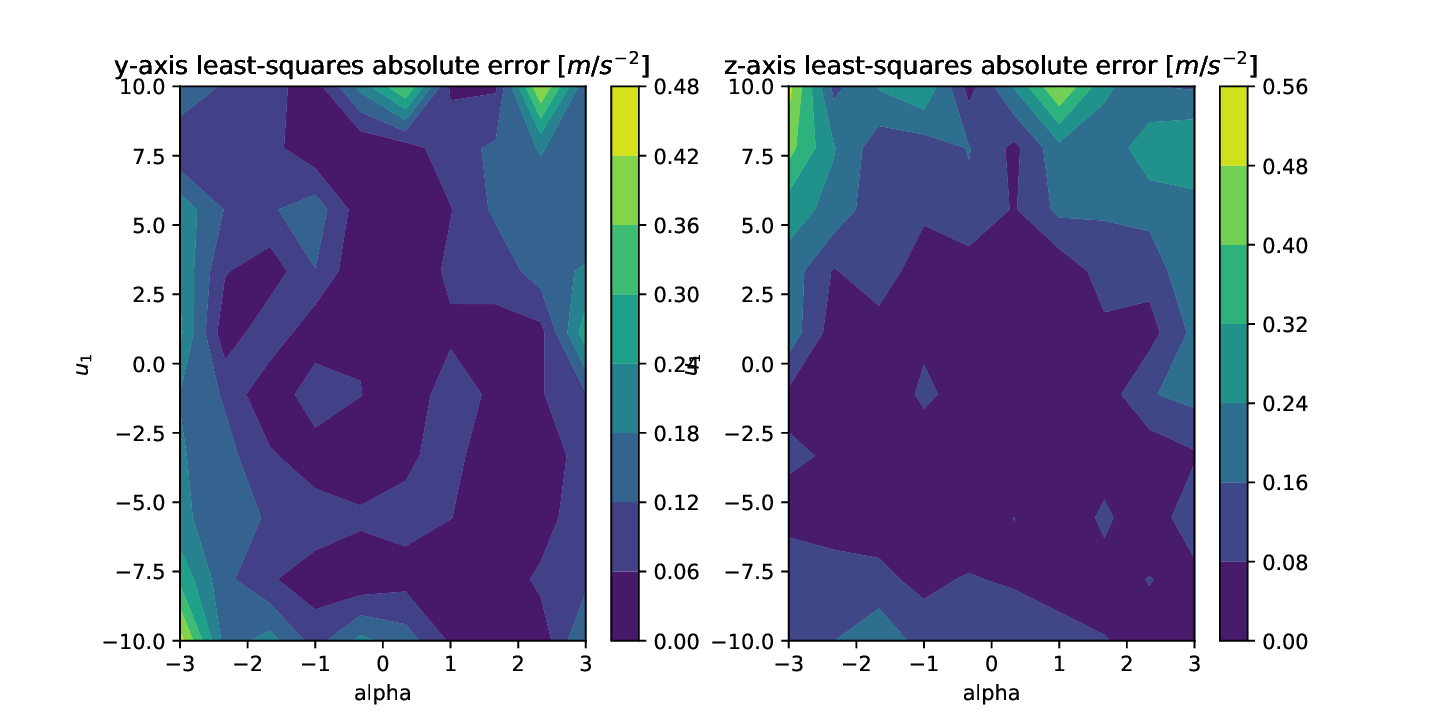}
    \caption{PVTOL aircraft model approximation error (MAE) with DCNN.}
    \label{fig:err}
\end{figure}

\subsubsection{Radial basis functions}
A sum of $49$ radial basis functions with multiquadric kernel is fitted on $100,000$ samples from the PVTOL model. This can be achieved by training a single layer neural network with multiquadric activation (as previously using RMSprop, MSE loss, batch of 64 and 100 epochs). The resulting accuracy (MAE) is $1.63$ on $100$ samples.

\subsection{Effect of DC decomposition on closed-loop solution}

We now compare the choice of DC decomposition on the closed-loop solution. We simulate the PVTOL aircraft problem with a horizon $N=50$, time step $\delta=0.5 \, s$,  elementwise bounds parameterization, maximum number of iterations $1$, and compare for each configuration the time response in Figure \ref{fig:tmpc_comp} and objective evolution (a measure of how close the system is from the reference) in Figure \ref{fig:obj_comp}. The polynomial DC decomposition results in the fastest convergence without much steady state error.

\begin{figure}
    \centering
    \includegraphics[width=0.5\textwidth, trim={0cm 0cm 0cm 0cm}, clip]{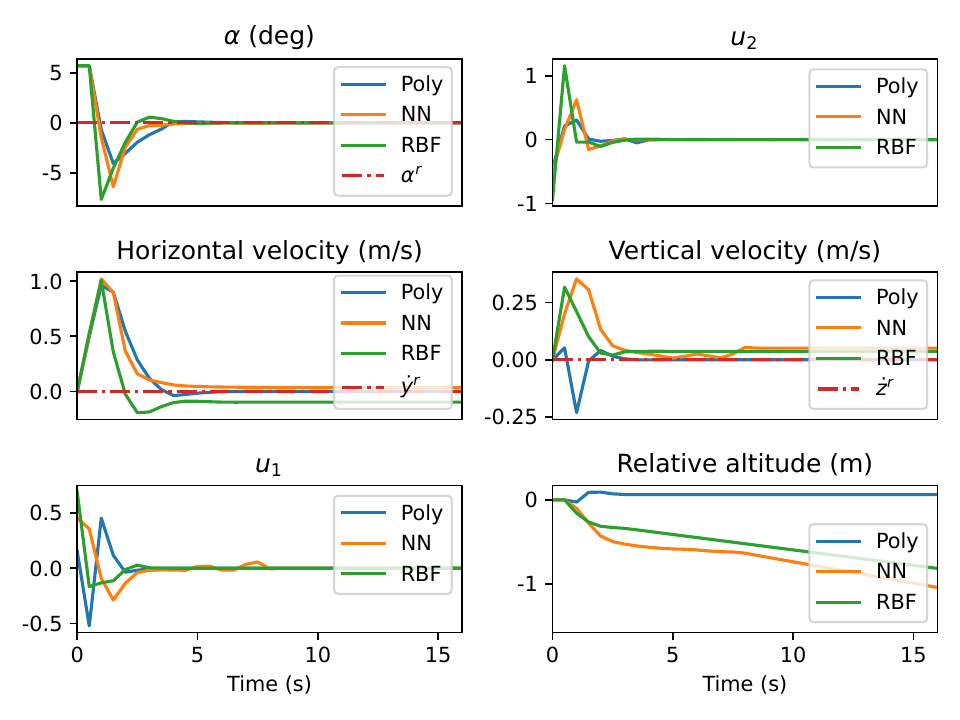}
    \caption{Comparison of closed-loop solution for each DC decomposition.}
    \label{fig:tmpc_comp}
\end{figure}

Setting the maximum number of iterations to $\text{maxIters} = 5$, we compare the evolution of the objective at the first time step in Figure \ref{fig:obj_comp0}. Again, the polynomial decomposition yields the fastest convergence.

\begin{figure}
\centering
     \begin{subfigure}[b]{0.22\textwidth}
         \centering
         \includegraphics[width=1\textwidth, trim={0cm 0cm 0cm 0cm}, clip]{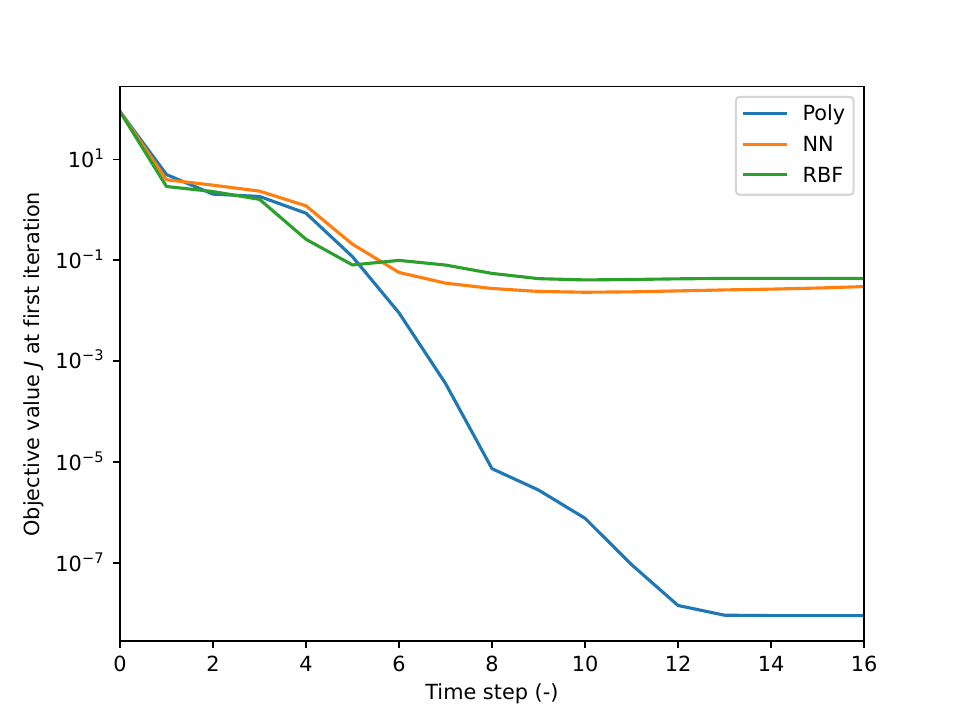}
    \caption{}
    \label{fig:obj_comp}
\end{subfigure}
\hfill
  \centering
     \begin{subfigure}[b]{0.22\textwidth}
         \centering
    \includegraphics[width=1\textwidth, trim={0cm 0cm 0cm 0cm}, clip]{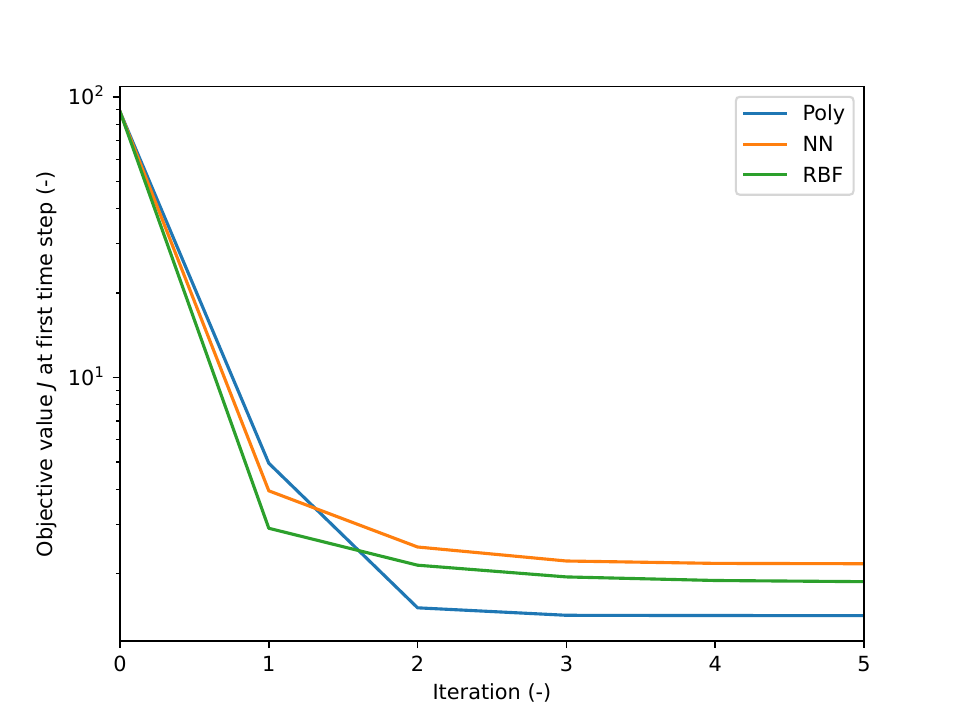}
    \caption{}
    \label{fig:obj_comp0}
\end{subfigure}
\caption{Comparison of objective evolution for each DC decomposition.}
    \label{fig:obj_comp2}
\end{figure}

Simulating the problem for multiple time horizon values $N$, the average time to completion per iteration as a function of problem dimension is compared for each decomposition in Figure \ref{fig:time_comp}. The polynomial decomposition is usually the fastest method, followed closely by the neural-network decomposition. The RBF decomposition is significantly slower. These results must be nuanced by the facts that: i) CVXPY was used as the interface for the MOSEK solver, but specialized solvers might yield better performance (e.g. with ADMM, an improvement by an order of magnitude should be expected as discussed in \cite{doff2022predictive}); ii) as discussed in \cite{doff2023robust}, the polynomial approach relies on a low order quadratic approximation at each point of the predicted trajectory in order to represent the polynomial constraint in CVXPY more efficiently; iii) performance depends on the hyperparameters for a given decomposition (e.g. a larger number of RBF functions means more computational burden). 

\begin{figure}
    \centering
    \includegraphics[width=0.4\textwidth, trim={0cm 0cm 0cm 0cm}, clip]{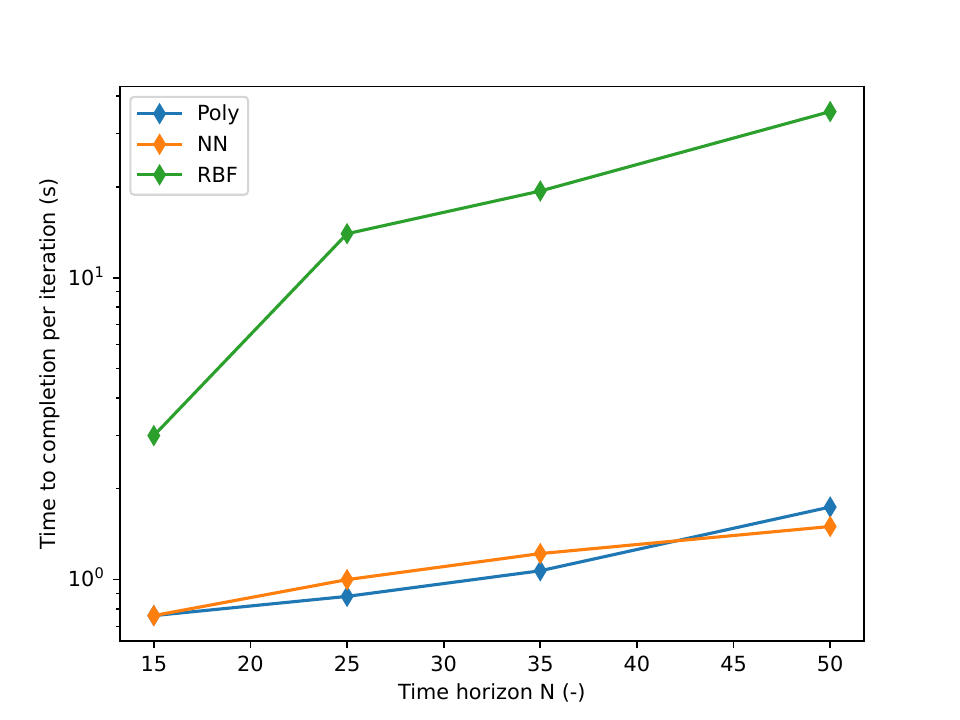}
    \caption{Comparison of time to completion as a function of problem dimension.}
    \label{fig:time_comp}
\end{figure}

We conclude from this subsection that using polynomial decomposition results in optimization problems with shorter computation time and yields faster convergence of the closed loop to the reference and smaller steady state errors.  

\subsection{Effect of additive disturbance}

The PVTOL aircraft was subject to a random additive disturbance following a uniform distribution $\mathcal{U}(-0.1, 0.1)$ and acting on the horizontal and vertical velocity dynamics. The time response of the closed loop is illustrated in Figure \ref{fig:dist} with corresponding disturbance magnitude, showing the satisfying disturbance rejection capabilities of the controller (neural-network-based decomposition, $N=50$, $\delta=0.5 \, s$). Figure \ref{fig:dist3} shows similarly good disturbance rejection is observed even when being robust to neural network modelling errors (see Section \ref{sec:dist3}).

\begin{figure}[ht]
    \centering
    \includegraphics[width=0.4\textwidth, trim={0cm 0cm 0cm 0cm}, clip]{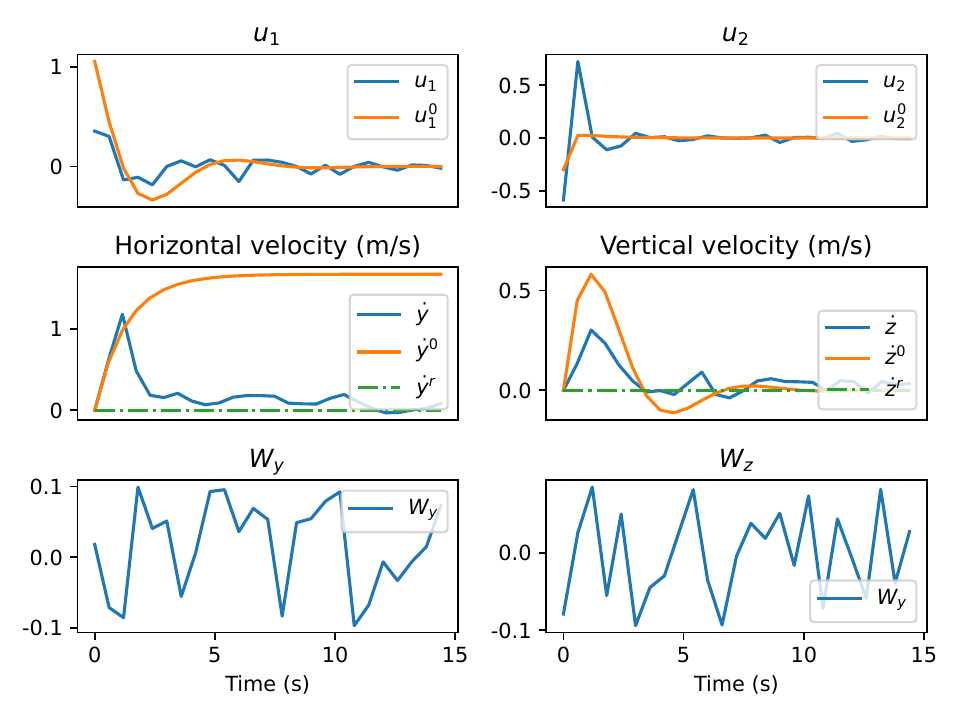}
    \caption{Closed-loop response with additive disturbance.}
    \label{fig:dist}
\end{figure}

\begin{figure}[ht]
    \centering
    \includegraphics[width=0.4\textwidth, trim={0cm 0cm 0cm 0cm}, clip]{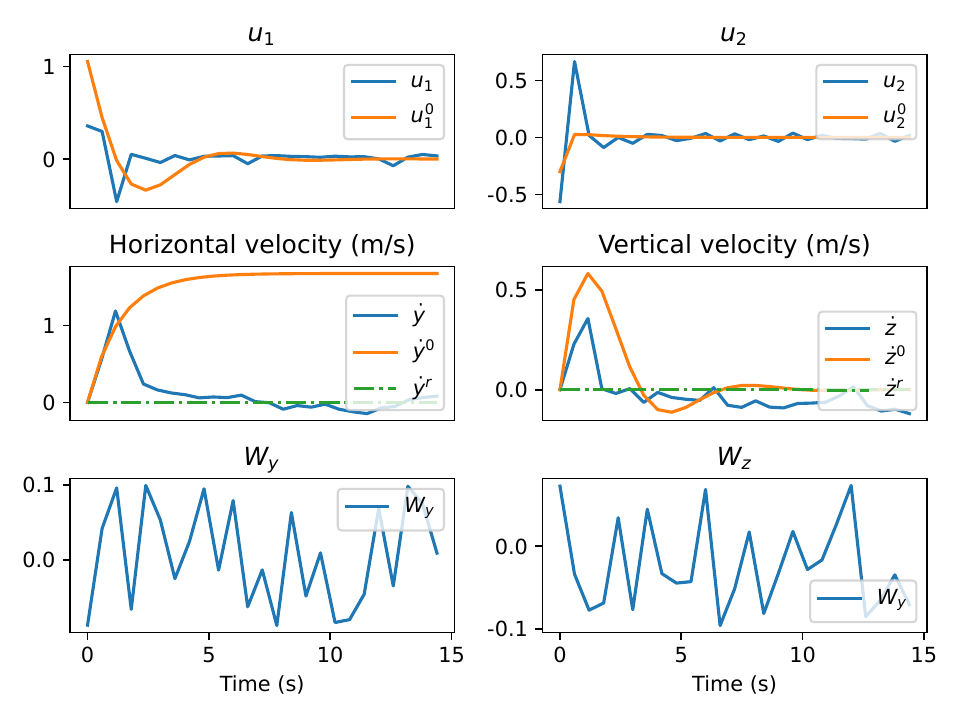}
    \caption{Closed-loop response with additive disturbance, robust to neural network modelling errors.}
    \label{fig:dist3}
\end{figure}

The magnitude of the disturbance can be increased further if a backtracking line search scheme is implemented as discussed in Section \ref{sec:dist} in order to prevent infeasible initial states induced by the disturbance. To demonstrate the possibility of recovering from infeasible optimization problems, the maximum allowable magnitude of the disturbance was multiplied by $10$ and the system was simulated with initial line search parameter $\rho = 0.2$. The optimization was found to be infeasible at $8$ different iterations but the backtracking scheme always reinitialized the problem with a feasible trajectory given by equation \eqref{eq:line_search} after $1$-$2$ iterations of the scheme, allowing the algorithm to resume. Since the sets $\mathcal{X}$ and $\mathcal{U}$ are defined via elementwise bounds, the performance of the backtracking algorithm is evaluated using the relaxation objective $J_{\mathcal{X}, \, \mathcal{U}}
:= \min_{\overline{\mathcal{X}},\, \underline{\mathcal{X}},\, \overline{\mathcal{U}},\, \underline{\mathcal{U}}}
\left\lVert \overline{\mathcal{X}} + \underline{\mathcal{X}} \right\rVert_Q
+ \left\lVert \overline{\mathcal{U}} + \underline{\mathcal{U}} \right\rVert_R $,
where $\overline{\mathcal{X}}, \underline{\mathcal{X}}, \overline{\mathcal{U}}, \underline{\mathcal{U}}$
are elementwise nonnegative relaxations of the sets $\mathcal{X}$ and $\mathcal{U}$.
At each iteration of the backtracking algorithm, these relaxations are selected such that the corresponding relaxed problem~\eqref{eq:cvx_dist} is feasible. The objective $J_{\mathcal{X}, \, \mathcal{U}}$ is a measure of the minimum relaxation required in the constraints for the problem to become feasible. When it becomes small enough, the problem is found to be feasible without any relaxation. Results for this run are presented in Table \ref{tab:lsearch}, which shows a total of $8$ infeasibility detections, all of which were recovered from using the proposed backtracking line search approach. The time step and iteration at which an infeasibility event is recorded are denoted by $n$ and $i$, respectively. The total number of backtracking iterations required to regain feasibility is denoted by $\mathcal{B}$, and the initial and final values of $J_{{\X,\U}}$ are denoted by $J_{\X,\U}^0$ and $J_{\X,\U}^\mathcal{B}$, respectively.

Under resource-constrained environments, it may also be useful to run the algorithm with \texttt{maxIters} $ = 1$, which in the modified algorithm is similar to running with $\rho \rightarrow 1$, as explained in Section \ref{sec:dist2}. The spread for five successful runs with a unit maximum allowable disturbance magnitude is presented in Figure \ref{fig:lsearchi1}, which still shows disturbance rejection despite the order-of-magnitude increase in disturbance bounds.

\renewcommand{\figurename}{Tab.}
\newcounter{oldfigure}
\setcounter{oldfigure}{\value{figure}}
\setcounter{figure}{\value{table}}
\addtocounter{table}{1}
\begin{figure}
    \centering
    \begin{tabular}{c c c c c}
        \hline
         $n$ & $i$ & $\mathcal{B}$ & $J_{\X,\U}^0$ & $J_{\X,\U}^\mathcal{B}$ \vspace{0.23em} \\ 
         \hline \hline
         $2$ & $2$ & $2$ & $5.0250 \times 10^{-4}$ & $7.7796 \times 10^{-8}$ \\ \hline
         $10$ & $3$ & $1$ & $1.3031 \times 10^{-4}$ & $6.5674 \times 10^{-9}$ \\ \hline
         $10$ & $4$ & $1$ & $1.3620 \times 10^{-4}$ & $1.8534 \times 10^{-8}$ \\ \hline
         $10$ & $5$ & $1$ & $1.2475 \times 10^{-4}$ & $7.7934 \times 10^{-8}$ \\ \hline
         $13$ & $1$ & $1$ & $4.3111 \times 10^{-4}$ & $1.5087 \times 10^{-7}$ \\ \hline
         $13$ & $4$ & $1$ & $1.4110 \times 10^{-2}$ & $1.4308 \times 10^{-7}$ \\ \hline
         $13$ & $5$ & $1$ & $1.4315 \times 10^{-2}$ & $2.9371 \times 10^{-8}$ \\ \hline
         $22$ & $5$ & $1$ & $4.1392 \times 10^{-4}$ & $1.2096 \times 10^{-7}$ \\ \hline
    \end{tabular}
    \caption{$J_{\X,\U}^0$ and $J_{\X,\U}^\mathcal{B}$ for all infeasibility detections.}
    \label{tab:lsearch}
\end{figure}
\setcounter{figure}{\value{oldfigure}}
\renewcommand{\figurename}{Fig.}

\begin{figure}
    \centering
    \includegraphics[width=0.4\textwidth, trim={0cm 0cm 0cm 0cm}, clip]{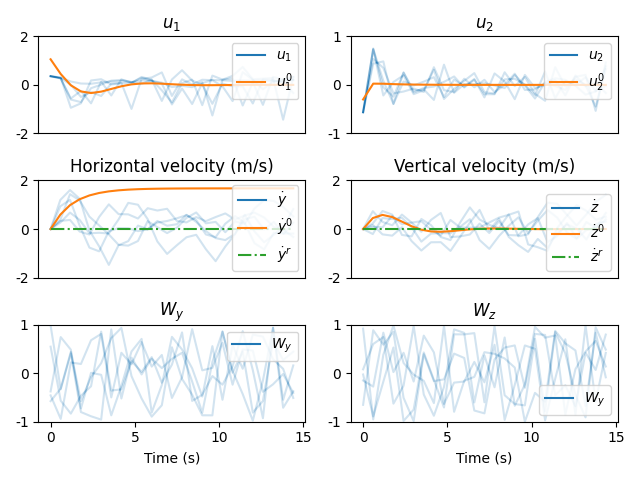}
    \caption{Closed-loop response for five runs with a unit maximum additive disturbance magnitude and $\texttt{maxIters}~=~1$.}
    \label{fig:lsearchi1}
\end{figure}

\subsection{Early termination property}

A key property of the MPC scheme presented is this paper is that the successive linearization procedure at each time step (i.e. the "while" loop in line 12 of Algorithms \ref{algo6} or \ref{algo7}) can be terminated early—in particular after only one iteration—without compromising stability under the MPC law. This is a direct consequence of the recursive feasibility property established in propositions \ref{prop:feas} and \ref{prop:feas_dist}, Section \ref{sec:theory} for the nominal and disturbed cases respectively. In Figures \ref{fig:tmpc_iter} and \ref{fig:obj_iter}, we compare the impact of early termination on the closed loop response and the objective evolution over time for the PVTOL aircraft with neural-network-based decomposition, simplex tube cross sections, $N=50$, $\delta=0.5 \, s$. The blue curves correspond to results obtained with $\text{maxIters}=1$ and the orange curves correspond to $\text{maxIters}=5$. It is plain that although more iterations per time step yield better convergence, the gain is not noticeable enough to compensate for the computational overhead of multiple iterations. In fact, the results obtained with a single iteration per time step are satisfying in this scenario and potentially more complex ones as shown in \cite{thesis}.

\begin{figure}
    \centering
    \includegraphics[width=0.5\textwidth, trim={0cm 0cm 0cm 0cm}, clip]{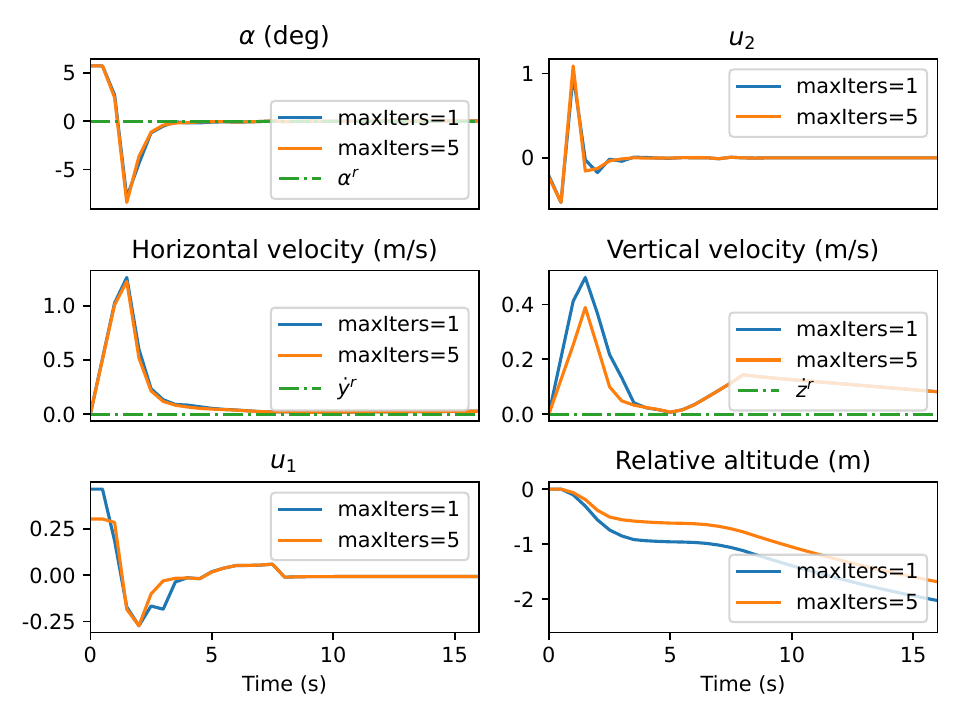}
    \caption{Comparison of closed-loop solution for different maximum iterations per time step.}
    \label{fig:tmpc_iter}
\end{figure}

\begin{figure}
    \centering
    \includegraphics[width=0.5\textwidth, trim={0cm 0cm 0cm 0cm}, clip]{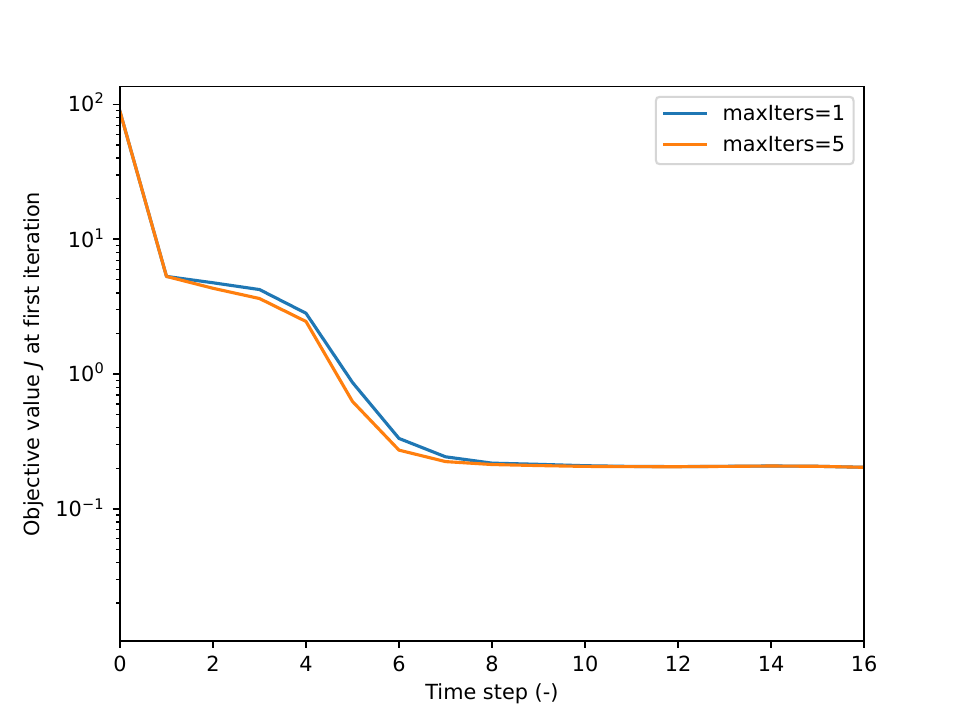}
    \caption{Comparison of objective evolution for different maximum iterations per time step.}
    \label{fig:obj_iter}
\end{figure}

\section{Conclusion}
This paper has introduced a computationally tractable robust MPC algorithm for nonlinear systems with continuous dynamics. The method relies on a difference-of-convex (DC) decomposition of the dynamics, successive linearizations and robust stabilization of the closed loop via a tube-based MPC scheme that treats linearization errors as bounded disturbances. Exploiting convexity in the DC dynamics, these disturbances are necessarily convex and tight, which allows deriving the control law as the solution of a sequence of convex programs with predictable computational effort. 

We have shown that the DC decomposition can be obtained  by various means, using algebraic techniques such as sum-of-squares polynomials or machine learning methods such as convex radial basis functions approximations and input-convex neural networks. We have presented a computationally efficient tube-based MPC algorithm to solve the robust control problem once the dynamics are expressed in DC form. Two tube parameterizations are proposed, relying on simplex and elementwise bounds.  We have demonstrated theoretical results such as recursive feasibility, convergence, and asymptotic stability. These results were generalized in the presence of external disturbances. Finally, we have conducted numerical simulations and analysed the perfomances of the algorithm in a PVTOL aircraft case study. 

Results show that the polynomial DC decomposition is superior to others in terms of accuracy, convergence and computation time.  The simplex parameterization of the tube is the most computationally efficient. Crucially, the scheme we propose can be terminated early after only one iteration per time step without affecting stability, which makes our method attractive in real-time applications.  The preliminary results with the CVXPY generic convex optimization package are promising and suggest excellent scalability if specific solvers are developed.  In particular, when the dynamics are decomposed in DC form using input-convex neural networks with ReLU activation, the nonlinear tube MPC optimization problem could be reformulated as a linear program. This would represent a notable progress in terms of computation time and would pave the way for real-time state-of-the-art implementations of nonlinear robust MPC. Another future direction will consider the use of convex recurrent neural network architectures to learn the system trajectories in DC form over the prediction horizon, which represents an alternative to learning the dynamic function and will expand the toolbox of available DC decompositions. Finally, we note that the proposed DC-based successive convexification framework is not specific to receding-horizon control and can be applied to single-shot nonlinear optimization problems with continuous constraints. We will investigate applications of the proposed method to problems in urban air mobility, sustainable aviation, and energy systems, where robust handling of nonlinear dynamics and uncertainty under real-time computational constraints is essential.

\appendix

\section{Feedback gain computation}
\label{app:feedback}
The gains $\{K_k\}_{k=0}^{N-1}$ in equation \eqref{eq:u} can be computed e.g. using a dynamic programming (DP) recursion initialized with $P_N = \hat{Q}$, and defined for $k=N-1, \dots , 0$ by
\begin{equation}\label{eq:dp}
\begin{aligned}
\Delta_k &= B_k^\top P_{k+1} B_k + R ,
\\
K_k &= -\Delta_k^{-1} B_k^\top P_{k+1} A_k,
\\
P_k &= Q + A_k^\top P_{k+1} A_k - A_k^\top P_{k+1}B_k \Delta_k^{-1} B_k^\top P_{k+1} A_k, 
\end{aligned}
\end{equation}
where  $A_{k} = \frac{\partial f}{\partial x}(x_k^\circ, u_k^\circ), B_{k} = \frac{\partial f}{\partial u}(x_k^\circ,u_k^\circ)$. 

\section{Terminal parameters} 
\label{app:term}

We summarize here a method for computing the terminal gain $\hat{K}$, terminal weighting matrix $\hat{Q}$, terminal set $\hat{\X}$ and terminal set bound $\hat{\gamma}$ for the TMPC problem of Section \ref{sec:DC-TMPC} by solving a semidefinite program (SDP).

To ensure convergence\footnote{Equation \eqref{eq:Q_N} can be seen as a descent property for the Lyapunov function $V(x - x^r) = || x - x^r ||^2_{\hat{Q}}$.}, the terminal weighting matrix $\hat{Q} \succ 0$ is chosen so that, for all $x\in\hat{\X}$, 
\begin{multline}\label{eq:Q_N}
\lVert x-x^r\rVert_{\hat{Q}}^2 \geq 
\lVert f(x,\hat{K}(x - x^r)+u^r) - x^r\rVert_{\hat{Q}}^2 
\\+ \lVert x - x^r \rVert_Q^2
+ \lVert \hat{K}( x - x^r) \rVert_R^2 ,
\end{multline}
where the terminal set $\hat{\X}$ and feedback gain $\hat{K}$ satisfy
\begin{equation}\label{eq:termset_feas}
\hat{\X}\subseteq\X, \quad 
\hat{K}\hat{\X}\oplus\{u^r-\hat{K} x^r\}\subseteq \U.
\end{equation}
The terminal set can be defined, for example, as
\begin{equation}\label{eq:termset_def}
\hat{\X} = \{ x : \lVert x\rVert_{\hat{Q}}^2 \leq \hat{\gamma} \}
\end{equation}
Given bounds $\bar{\X}=\{x : \lvert x - x^r \rvert \leq \delta^x\}\subseteq\X$, $\bar{\U} = \{u: \lvert u - u^r \rvert \leq \delta^u\}\subseteq\U$ on the state and control input within the terminal set, we assume that the nonlinear system dynamics  \eqref{eq:sys}
can be represented in $\bar{\X}\times\bar{\U}$
using a set of linear models.
The model approximation is assumed to satisfy, for all $k$,
\begin{equation}\label{eq:ldi_approx}
\begin{aligned}
f(x,u) - f(x^r, u^r) &\in \Co\{ A^{(i)} (x-x^r_k) + B^{(i)} (u - u^r_k), 
\\
&\qquad \quad i = 1,\ldots,m\} , \ \forall (x,u) \in \bar{\X}\times \bar{\U}
\end{aligned}
\end{equation}
where $\Co$ denotes the convex hull. In order that $\hat{Q}$ and $\hat{K}$ satisfy the inequality (\ref{eq:Q_N}) we require, for all $ x\in \bar{\X}$,
\begin{multline*}
\|x - x^r \|^2_{\hat{Q}} \geq \bigl\|A^{(i)} (x-x^r) + B^{(i)} \hat{K} (x -x^r) \bigr\|^2_{\hat{Q}}  \\+ \|x-x^r\|^2_Q + \|\hat{K} (x-x^r) \|^2_R .
\end{multline*}
Since each term is quadratic in $x-x^r$, this condition is equivalent to a set of matrix inequalities, for $i = 1,\ldots,m$,
\[
\hat{Q} \succeq (A^{(i)}+B^{(i)}\hat{K})^\top \hat{Q} (A^{(i)}+B^{(i)}\hat{K}) + Q + \hat{K}^\top R \hat{K} ,
\]
which can be expressed equivalently using Schur complements as LMIs in variables $S = \hat{Q}^{-1}$ and $Y = \hat{K}\hat{Q}^{-1}$:
\begin{equation}\label{eq:term_weight_lmi}
\begin{bmatrix}
S & (A^{(i)}S + B^{(i)}Y)^\top & S & Y^\top \\
\star & S & 0 & 0 \\
\star & \star & Q^{-1} & 0 \\
\star & \star & \star &  R^{-1} 
\end{bmatrix} \succeq 0, \ i = 1,\ldots,m.
\end{equation}
To ensure that the model approximation (\ref{eq:ldi_approx}) remains valid we can exploit the positive invariance of the set $\hat{\X}=\{x : \|x - x^r\|_{\hat{Q}} \leq \hat{\gamma}\}$ for all $\hat{\gamma} > 0$, 
and impose the constraints
\[
\{x : \|x-x^r\|_{\hat{Q}}^2 \leq \hat{\gamma}\} \subseteq \bar{\X} \cap \{ x: Kx \in \bar{\U}\} 
\]
which are equivalent to
\begin{align}
\hat{\gamma}^{-1} [ \delta^x ]_i^2 - [S]_{ii} \succeq 0, &  &i = 1,\ldots,n_x
\label{eq:termxcon}\\
\begin{bmatrix} \hat{\gamma}^{-1}[\delta^u ]_i^2 & [Y]_i \\ \star & S \end{bmatrix} \succeq 0, & & i = 1,\ldots,n_u
\label{eq:termucon}
\end{align}
To balance the requirements for good terminal controller performance and a large terminal set, we can minimize $\tr(\hat{Q}) + \alpha \hat{\gamma}^{-1}$ subject to constraints (\ref{eq:term_weight_lmi}), (\ref{eq:termxcon}), (\ref{eq:termucon}) and
\begin{equation}\label{eq:Sbnd}
\begin{bmatrix} S & I \\ \star & \hat{Q} \end{bmatrix} \succ 0 ,
\end{equation}
over variables $S=S^\top$, $Y$ and $\hat{\gamma}^{-1}$, where $\alpha$ is a scalar constant that controls the trade-off between the competing objectives of minimising $\tr(\hat{Q})$ and minimising $\hat{\gamma}^{-1}$.

\section{Maximum of a convex function defined on a polytope}
\label{app:boundary}

\begin{theorem}[Maximum of a convex function]
The maximum of a convex function $f$ defined over a polytopic set $\mathcal{P} = \mathrm{Co}\{v_1, . . . , v_m\}$ is achieved at one of its vertices $v_1, \hdots, v_m$, i.e.
\[ \max_{x \in \mathcal{P}} f(x) = \max\{f(v_1), \hdots,  f(v_m)\} .\]
\end{theorem}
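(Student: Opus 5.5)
The plan is to prove the two inequalities ``$\le$'' and ``$\ge$'' separately, using only the definition of convexity (Jensen's inequality for finite convex combinations) and the representation of $\mathcal{P}$ as a convex hull.

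The direction ``$\ge$'' is immediate. Each vertex $v_i$ belongs to $\mathcal{P}=\Co\{v_1,\ldots,v_m\}$, so
\[
\max_{x\in\mathcal{P}} f(x)\ \ge\ \max\{f(v_1),\ldots,f(v_m)\}.
\]
I will also note that the maximum on the left is attained, not just a supremum, once the reverse inequality is shown. The reverse inequality exhibits a point of $\mathcal{P}$ (a vertex) whose value equals the supremum, so no continuity or compactness argument is required.

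For ``$\le$'', I take an arbitrary $x\in\mathcal{P}$ and write it as $x=\sum_{i=1}^m\lambda_i v_i$ with $\lambda_i\ge 0$ and $\sum_i\lambda_i=1$, which is possible by definition of the convex hull. Jensen's inequality for convex $f$ with finitely many points then gives
\[
f(x)\le\sum_{i=1}^m\lambda_i f(v_i)\le\Bigl(\sum_{i=1}^m\lambda_i\Bigr)\max_i f(v_i)=\max_i f(v_i).
\]
The finite-point Jensen inequality follows from the two-point definition of convexity by induction on $m$. The inductive step writes $x=\lambda_m v_m+(1-\lambda_m)y$, where $y=\sum_{i<m}\frac{\lambda_i}{1-\lambda_m}v_i\in\Co\{v_1,\ldots,v_{m-1}\}$, treating the case $\lambda_m=1$ separately. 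An even shorter route skips Jensen and inducts directly on the bound $f(x)\le\max_i f(v_i)$. Since $x$ is arbitrary, the supremum over $\mathcal{P}$ is at most $\max_i f(v_i)$. Combined with the first direction, this gives equality, and the maximum is attained at any index $i^\star\in\arg\max_i f(v_i)$.

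There is no real obstacle here. The only point needing care is that $f$ must be convex on a convex domain containing $\mathcal{P}$, which is implicit in the statement, so that Jensen applies to points of $\mathcal{P}$.

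One clarification concerns the list $v_1,\ldots,v_m$, which may contain points that are not extreme points of $\mathcal{P}$. The argument above does not require them to be extreme, so it covers exactly the way the result is used in Section~\ref{sec:DC-TMPC}. There the vertex sets are given by \eqref{eq:element} or \eqref{eq:simplex}, and the functions involved are the norm costs and $g-\lfloor h^\circ\rfloor$, each of which is convex.
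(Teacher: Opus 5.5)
Your proof is correct and matches the paper's argument: you write $x\in\mathcal{P}$ as a convex combination of the $v_i$ and apply Jensen's inequality to get $f(x)\le\sum_i\lambda_i f(v_i)\le\max_i f(v_i)$. You also state explicitly the trivial reverse inequality and the attainment of the maximum, which the paper leaves implicit, and you rightly use nonnegative weights $\lambda_i$ where the paper says ``positive''.
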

\begin{pf}
Since $\mathcal{P} = \mathrm{Co}\{v_1, . . . , v_m\}$, any $x \in \mathcal{P}$ can be expressed as a convex combination of the vertices as follows $x = \sum_{i=1}^m \lambda_i v_i$ for some positive $\lambda_i$ that form a partition of unity. By convexity of $f$, we have
\[
f(x) \leq \sum_{i=1}^m \lambda_{i} f(v_i) \leq \max_i \{ f(v_i) \}, 
\]
which is valid $\forall x \in \mathcal{P}$ and achieves the proof.  \end{pf}

\section{Linearization}
\label{app:lin}

In Section \ref{sec:DC-TMPC}, the DC constraints in equation \eqref{eq:f_Q} are linearized to obtain the convex constraints $g(x_k, u_k) - \lfloor h^\circ \rfloor(x_k, u_k) \leq q_{k+1}$ in equation \eqref{eq:f_Q_hat} where $\lfloor h^\circ \rfloor(x_k, u_k) = h(x_k^\circ, u_k^\circ) + A_k(x_k - x_k^\circ) + B_k(u_k - u_k^\circ)$ and $A_k, B_k$ are the matrices of the state space linearization of $h$ at $(x_k^\circ, u_k^\circ)$. We now discuss the form taken by these matrices depending on the specific DC decomposition. In what follows we denote $x = [x_k^\top \quad u_k^\top]^\top$ and $x^\circ = [x_k^{\circ \top}  \quad u_k^{\circ \top}]^\top$.

\subsection{SOS-convex Polynomials}
Assuming $h = [y^\top H_1 y, \hdots, y^\top H_{n_x}y]^\top$, the state-space linearized system is given by
    \begin{gather*}
        [A_k]_{lj} = y(x_k^\circ, u_k^\circ)^\top ( D_{x, j}^\top H_l + H_l D_{x, j}) y(x_k^\circ, u_k^\circ), \\
        [B_k]_{lj} = y(x_k^\circ, u_k^\circ)^\top ( D_{u, j}^\top H_l + H_l D_{u, j}) y(x_k^\circ, u_k^\circ) , 
    \end{gather*}
where we denote $\partial y/ \partial [x_{k}]_j = D_{x, j} y $, $\partial y/ \partial [u_{k}]_j  = D_{u, j} y $. 

\subsection{Input-convex neural networks}
Let $h$ be approximated by an $L$-layer ICNN: $h = (\sigma \circ \mathcal{A}_{\theta_{L-1}, x} \circ \hdots \circ \sigma \circ \mathcal{A}_{\theta_{1}, x} \circ \sigma \circ \mathcal{A}_{\Phi_{0}} ) (x)$. The chain rule can be used to compute the first order derivative of the network as follows
\begin{multline}
\frac{\partial h}{\partial x} (x ; \theta) = (\sigma^\prime \circ \mathcal{A}_{\theta_{L-1}, x} \circ \hdots \circ \sigma \circ \mathcal{A}_{\Phi_{0}})(x) \cdot (\mathcal{A}_{\theta_{L-1}, x}^\prime \circ \hdots \\\circ (\sigma^\prime \circ \mathcal{A}_{\theta_{1}, x} \circ \sigma \circ \mathcal{A}_{\Phi_{0}})(x) \cdot (\mathcal{A}^{\prime}_{\theta_{1}, x} \circ \sigma \circ \mathcal{A}_{\Phi_{0}})(x) \\ \circ (\sigma^\prime \circ \mathcal{A}_{\Phi_{0}})(x) \cdot \mathcal{A}^{\prime}_{\Phi_{0}}(x),
\end{multline}
where $\mathcal{A}^\prime_{\theta_{l}, x}(z) = \Theta_l z +  \Phi_l$, $\forall l \in \{ 1, \hdots, L-1\}$,  $\mathcal{A}^{\prime}_{\Phi_{0}} = \Phi_{0}$,  and in the case of ReLU activation, $\sigma^\prime(x) = \text{diag}\{H(x_1), H(x_2), \hdots\}$ where $H(\cdot)$ is the Heaviside step function.

\begin{example}
Consider an ICNN with $L=2$ and partitioning $\Phi_l = [\Phi_{l, x_k} \, | \, \Phi_{l, u_k}]$,   $\forall l \in \{ 0, 1\}$, where $\Phi_{l, x_k} \in \RR^{n_l \times n_x}$, $\Phi_{l, u_k} \in \RR^{n_l \times n_u}$, the $A_k, B_k$ matrices of the state space linearization are given by
\begin{gather*}
    A_k = \frac{\partial h}{\partial x_k}(x_k^\circ, u_k^\circ; \theta) = \sigma^\prime(a_1) (\Theta_1 \sigma^\prime(a_0) \Phi_{0, x_k} +\Phi_{1, x_k}),\\
    B_k = \frac{\partial h}{\partial u_k}(x_k^\circ, u_k^\circ; \theta) = \sigma^\prime(a_1) (\Theta_1 \sigma^\prime(a_0) \Phi_{0, u_k} +\Phi_{1, u_k}),
\end{gather*}
where $a_1 = \Theta_1\sigma(\Phi_{0, x_k} x_k^\circ + \Phi_{0, u_k} u_k^\circ + b_0) + \Phi_{1, x_k} x_k^\circ + \Phi_{1, u_k} u_k^\circ + b_1$ and $a_0 = \Phi_{0, x_k} x_k^\circ + \Phi_{0, u_k} u_k^\circ + b_0$. 
\end{example}

\subsection{Radial basis functions}
Let $h$ be approximated as a weighted sum of convex RBF: $h = \sum_{j=1}^{m} \alpha_j \sqrt{1 + \rho_j^2 (x-c_j)^\top (x-c_j) }$.  Partitioning $c_{j} = [c_{x, j}^\top \quad c_{u, j}^\top]^\top$, the $A_k, B_k$ matrices of the state space linearization of $h$ at $(x_k^\circ, u_k^\circ)$ are given by
\begin{gather*}
    A_k = \sum_{j=1}   \frac{\rho^{2}_j\alpha_j (x_k^\circ-c_{x, j})^\top}{\sqrt{1 + \rho^{2}_j (x^\circ-c_j)^\top (x^\circ-c_j) }}, \\
    B_k = \sum_{j=1}   \frac{\rho^{2}_j \alpha_j (u_k^\circ-c_{u, j})^\top}{\sqrt{1 + \rho^{2}_j (x^\circ-c_j)^\top (x^\circ-c_j) }}.
\end{gather*}

\bibliography{biblio} 
\bibliographystyle{unsrtnat}

\end{document}